\documentclass[11pt]{article}

\usepackage[a4paper,headinclude=false,footinclude=false,DIV=10]{typearea}
\usepackage{setspace}

\usepackage{amsmath,amsthm,amssymb,amscd}

\usepackage{epsfig,graphicx}
\usepackage[colorlinks=false]{hyperref}
\usepackage[english]{babel}

\usepackage[utf8]{inputenc}  
\usepackage[T1]{fontenc}
\usepackage{fourier} 
\usepackage[scaled=0.875]{helvet}

%
%
%
%
%
%


\newtheoremstyle{mystyle}{}{}{\rmfamily}%
{}{\normalfont\bfseries}{ }{ }{} 

\newtheorem{theorem}{Theorem}[section]
\newtheorem{proposition}[theorem]{Proposition}
\newtheorem{lemma}[theorem]{Lemma} 
\newtheorem{corollary}[theorem]{Corollary}
\newtheorem{definition}[theorem]{Definition}
\theoremstyle{mystyle}
\newtheorem{remark}[theorem]{Remark}
\newtheorem{example}{Example}



\newcommand{\R}{\mathbb{R}}
\newcommand{\N}{\mathbb{N}}

\newcommand{\Z}{\mathbb{Z}}
\newcommand{\F}{\mathbb{F}}

\newcommand{\cB}{\mathcal{B}}
\newcommand{\cZ}{\mathcal{Z}}
\newcommand{\cF}{\mathcal{F}}
\newcommand{\cC}{\mathcal{C}}

\newcommand{\cW}{\mathcal{W}}
\newcommand{\cL}{\mathcal{L}}
\newcommand{\cP}{\mathcal{P}}

\newcommand{\cH}{\mathcal{H}}

\newcommand{\te}{{\theta}}

\newcommand{\Sig}{{\Sigma}}

\newcommand{\1}{\mathbf{1}}
\newcommand{\with}{\;:\;} 
\newcommand{\supp}{\mathrm{supp}} 
\newcommand{\id}{id}

\newcommand{\myD}{D}
\newcommand{\myLD}{LD}

\begin{document} 

\begin{flushleft}
\noindent\rule{\textwidth}{0.3pt}
\textbf{\textsf{\LARGE On conformal measures and harmonic functions\\[2mm] for group extensions}}\\[6mm]
\textbf{Manuel Stadlbauer}\\[2mm]
{\footnotesize  Departamento de Matemática, Universidade Federal do Rio de Janeiro\\[-1mm] Cidade Universitária - Ilha do Fundão. 21941-909 Rio de Janeiro (RJ), Brazil.} 
\noindent\rule{\textwidth}{0.3pt}
\end{flushleft}

\bigskip

\begin{abstract}\noindent We prove a Perron-Frobenius-Ruelle theorem for 
group extensions of topological Markov chains based on a construction of $\sigma$-finite conformal measures
and give applications to the construction of harmonic functions.\\[.1cm]
\textbf{Keywords} Group extension, conformal measures, harmonic functions
\end{abstract}

\section{Introduction}

The Perron-Frobenius-Ruelle theorem is a statement about the maximal eigenvalue of an operator $L$ who preserves the cone of positive functions. Namely, it provides existence of a function $f$ in this cone and  $\nu$ in its dual, such that $Lf=\rho f$ and $L^\ast \nu =\rho \nu$, with $\rho$ referring to the spectral radius of $L$. The first result of this type was obtained by Perron in \cite{Perron:1907} as a byproduct of his analysis of periodic continued fractions. He proved that, for a strictly positive  $n \times n$-matrix $A$, the maximal eigenvalue $\rho$ is simple. Moreover, his proof reveals that there exist strictly positive vectors $x,y \in \R^n$ such that $x^t A = \rho x^t$, $Ay= \rho y$ and that $\rho^{-n} A^n$ converges to $y \cdot x^t$.
Even though there were many important contributions to the theory of positive operators in the following decades, e.g. by Doeblin-Fortet (\cite{DoeblinFortet:1937}) or Birkhoff (\cite{Birkhoff:1957}), whose methods are today standard tools in proving exponentially fast convergence of the iterates (see, e.g., \cite{AaronsonDenkerUrbanski:1993,Liverani:1995}), it was only at the end of the 60's when Ruelle obtained an analog of Perron's theorem for the one-dimensional Ising model with long range interactions from mathematical physics (\cite[Theorem 3]{Ruelle:1968}).
In the context of dynamical systems, as observed by Bowen, the result of Ruelle has the following formulation in terms of shift spaces. For a fixed  $k \in \N$, let 
\begin{align*}
\Sigma &:= \left\{ (x_i : i \in \N) : x_{i} \in \{ 1, \ldots, k\} \; \forall i \in \N \right\},\\
\theta &: \Sigma  \to \Sigma  \; (x_1,x_2, \ldots) \mapsto   (x_2,x_3, \ldots)
\end{align*}
and suppose that $\varphi: \Sigma \to (0,\infty)$ is a $\log$-Hölder continuous function with respect to the shift metric (for details, see below). The associated operator, the Ruelle operator, is then defined by 
\[ L_\varphi(f)(x) := \sum_{\te(y)=x} \varphi(y) f(y).  \]
Ruelle's theorem states that there exist a strictly positive Hölder continuous function $h$ and a probability measure $\mu$ such that 
 $L_\varphi(f)= \rho f$,  $L_\varphi^\ast(\mu)= \rho \mu$ and $\lim \rho^{-n} L_\varphi^n f = \int f d\mu \cdot h$. Furthermore, $\rho^{-n} L_\varphi^n f \to \int f d\mu \cdot h$ converges exponentially fast, which implies, among many other things, that $h$ is unique and that the measure $\mu$ is exact (and, in particular, ergodic). 
 
The aim of this note is to establish an analogue of the result for dynamical systems of the form 
 \[ T: \Sigma \times G \to \Sigma \times G, (x,g) \mapsto (\te x, g \psi(x)),\]
where $G$ is a discrete group $G$, $\Sigma$ a shift space with the b.i.p.-property as defined below and $\psi: \Sigma \to G$ a locally constant function. This kind of dynamical system is called group extension, or, as they first were considered by Rokhlin in \cite{Rohlin:1952},  Rokhlin transformation. Even though one might be tempted to think of $\psi$ as a cocycle, the probably most fruitful approach is to consider $T$ as a kind of random walk on $G$. That is, by fixing a potential function $\varphi$ which only depends on the first coordinate, $\varphi(x)$ stands for the transition probability to go from $(x,g)$ to 
$(\theta x, g \psi(x))$, which is also reflected by the fact that the Ruelle operator $\mathcal{L}_\varphi$ associated to $T$ has many similarities to the Markov operator of a random walk.
 
In this setting, it is possible to obtain the following operator theorem which is the main result (Theorem \ref{theo:family_of_measures-2}) of this note. Under a technical condition (which is satisfied if, e.g., $\Sigma$ is compact), it is shown that there exists a Lipschitz continuous map $\mu \to \nu_\mu$ from the space of probability measures  on $\Sigma \times G$ to the space of  $\sigma$-finite, conformal measures, that is $\nu_\mu$ is $\sigma$-finite and $\mathcal{L}_\varphi^\ast(\nu_\mu) = \rho \nu_\mu$. This map is constructed using the method by Denker-Urbanski in \cite{DenkerUrbanski:1991b}. By adapting ideas of Patterson (\cite{Patterson:1976}) and Sullivan  (\cite{Sullivan:1979}) from hyperbolic geometry, one  obtains by variation of $\mu$ a family 
of strictly positive, $\rho$-harmonic functions. In here, we refer to $h: \Sig \times G \to \R$ as \emph{harmonic} or \emph{$\rho$-harmonic} if  $\mathcal{L}_\varphi(h) = \rho h$. In particular, theorem \ref{theo:family_of_measures-2} gives rise to families of  $\sigma$-finite, conformal measures  $\{\nu_\mu\}$, $\rho$-harmonic, positive functions $\{h\}$, and $T$-invariant measures $\{ hd\nu_\mu\}$. Furthermore, as the conformal measures are pairwise equivalent, the function $\mathbb{K}(\mu,z) := (d\nu_\mu/d\nu)(z)$ for a fixed conformal reference measure $\nu$ is defined and, as shown in theorem \ref{theo:family_of_measures-2}, its logarithm is locally Lipschitz continuous with respect to the first coordinate and $T$-invariant with respect to the second.

It is important to point out that these families might not be one-dimensional. For example, the classical and general result of Zimmer in \cite{Zimmer:1978a} (see \cite{Jaerisch:2013a} for a version for group extensions) states that 
 ergodicity implies amenability. Hence, as $G$ not necessarily is amenable, $\nu_\mu$ might not be ergodic and the standard argument for uniqueness of conformal measures no longer is applicable. However, for the setting in here, a sharp criterium of classical flavour holds (Proposition \ref{for:ergodic_iff_divergence_type}). That is, $\nu_\mu$ is conservative and ergodic if and only if  
\[  \sum_{n=1}^{\infty} \rho^{-n}  \sum_{T^n(x,\id) = (x,\id)} \prod_{k=0}^{n-1}\varphi(\theta^k(x))
= \infty. \]
Hence, it is of interest to analyse the families of conformal measures and harmonic functions if $T$ is non-ergodic. 
In order to have explicit examples at hand, we use the fact that a random walk with independent increments can be identified with a group extension. For $T$ associated with the random walk on $\Z^d$ or the free group $\mathbb{F}_d$, 
the $d$-dimensional central limit theorem and the local limit theorem by Gerl and Woess in \cite{GerlWoess:1986}, respectively, 
allow to explicitly determine $\mathbb{K}$. For these specific examples, it turns out that the family of conformal measures is one-dimensional for $\Z^d$ and non-trivial for $\mathbb{F}_d$. 

For a further analysis of the general setting, these conformal measures are employed to construct a positive map from the space of functions $\mathcal{C}$ whose logarithm is uniformly continuous to the space of harmonic functions $\mathcal{H}$ satisfying a certain local Lipschitz condition (Theorem \ref{prop:construction of eigenfunctions}). Then, in order to at least roughly determine the behaviour of harmonic functions and $\nu$ at infinity, further ideas from probability and ergodic theory are employed. Namely, 
for a given pair $(h,\nu)$ of a positive harmonic function and a conformal measure, $hd\nu$ is invariant and therefore, the natural extension of $(T,hd\nu)$ is well-defined. Therefore, through Martingale convergence, it is possible to show (Corollary \ref{cor:decay_of_the_measure_nu}) for $G$ non-amenable and under a symmetry condition  that 
\[ \nu_\mu\left( \left\{ (x,\psi(\omega) \cdots \psi(\theta^{n-1}(\omega))) : x\in \Sig \right\} \right) = o(\rho^n), \]  
for a.e. $\omega \in \Sig$ with respect to the equilibrium measure of $(\Sig,\theta,\varphi)$. These  results also have a canonical application to the dimension theory of graph directed Markov systems, which is outlined in theorem 
\ref{theo:dimension_of_the_measure}.

\section{Topological Markov chains}

We begin with defining the basic object of our analysis, that is topological Markov chains and their group extensions. For a countable alphabet $\cW$ and a matrix $(a_{ij}:\; i,j \in \cW)$ with $a_{ij}\in \{0,1\}$ for all $i,j \in \cW$ and no rows and columns equal to $0$, let the pair $(\Sig, \te)$ denote the associated one-sided topological Markov chain given by 
\begin{align*}
\Sig& := \left\{(w_k:\;k=1,2,\ldots)\with w_k \in \cW, a_{w_k w_{k+1}}= 1 \;\forall i = 0,1, \ldots  \right\},\\
\te &: \Sig \to \Sig, \te:(w_k:\;k=1,2,\ldots) \mapsto (w_k:\;k=2,3,\ldots).
\end{align*} 
A finite sequence $w=(w_1w_2\ldots w_n)$ with $n \in \N$, $w_k \in \cW$ for $k=1,2, \ldots, n$ and $a_{w_k w_{k+1}}= 1$ for $k=1,2, \ldots, n-1$ is referred to as \emph{admissible} or as \emph{word of length $n$}, the set of  words of length $n$ will be denoted by $\cW^n$ and 
the set 
\[[w]:= \{(v_k)\in \Sig \with w_k=v_k\; \forall k=1,2,\ldots, n\}\]
is referred to as a \emph{cylinder of length $n$}. Furthermore, $|w|$ denotes the length of a word and $\cW^\infty = \bigcup_{n=1}^\infty \cW^n$ the set of all admissible words. Since $\te^n: [w]\to \te^n([w])$ is a homeomorphism, observe that the inverse $\tau_w: \te^n([w]) \to [w]$ is well defined. 

As it is well known, $\Sig$ is a Polish space with respect to the topology generated by cylinders and $\Sig$ is compact with respect to this topology if and only if $\cW$ is a finite set. Moreover, the topology generated by cylinders is compatible with the metric defined by, for $r\in (0,1)$ and $(w_k),(v_k)\in \Sig$,
\[ d_r((w_k),(v_k)) := r^{\min(i: w_i \neq v_i) - 1 }.\]
Observe that with respect to this definition, the $r^n$-neighbourhood of $(w_k) \in \Sig$ is given by the cylinder $[w_1w_2\ldots w_n]$ of length $n$. Also recall that $\Sig$  is  \emph{topologically transitive} if for all $a,b \in \cW$, there exists $n \in \N$ such that  $\te^{n}([a])\cap [b]  \neq \emptyset$ and is \emph{topologically mixing} if for all $a,b \in \cW$, there exists $N\in \N$ such that $\te^{n}([a])\cap [b]  \neq \emptyset$ for all $n \geq N$. Moreover, a topological Markov chain is said to have \emph{big images} and \emph{big preimages}
if there exists a finite set $\mathcal{I}_{\textrm{\tiny bip}}  \subset \cW$ 
such that for all $v \in \cW$, there exists $\beta_1, \beta_2 \in \mathcal{I}_{\textrm{\tiny bip}} $ such that $(v \beta_1) \in \cW^2$ and 
$(\beta_2 v) \in \cW^2$.
Finally, we say that a topological Markov chain satisfies the \emph{big images and preimages (b.i.p.) property}  if the chain is topologically mixing
and has big images and preimages (see \cite{Sarig:2003a}). Note that the b.i.p. property coincides with the notion of finite irreducibility for topological mixing topological Markov chains as introduced by Mauldin and Urbanski (\cite{MauldinUrbanski:2001}).

\paragraph{Potentials.} A further basic object for our analysis is a fixed, strictly positive function $\varphi: \Sigma \to \R$ which is referred to as a \emph{potential}. This function might be seen as weight on the preimages of a point and in many applications, $\varphi$ is defined as the conformal derivative of an underlying iterated function system. 
For $n \in \N$ and $w \in \cW^n$, set $\Phi_n := \prod_{k=0}^{n-1} \varphi \circ \te^k$ and $\Phi_w := \Phi_n \circ \tau_w$. 
We refer to $\varphi$ as  a  potential of \emph{(locally) bounded variation} if 
\[ \sup\left\{ \frac{\Phi_n(x)}{\Phi_n(y)} :\; n \in \N,\; w \in \cW^n, \; x, y \in [w]   \right\}  < \infty. \]
From now on, for positive sequences $(a_n),(b_n)$ we  will write $a_n \ll b_n$ if there exists $C>0$ with $a_n \leq C b_n$ for all $n \in \N$, and $a_n \asymp b_n$ if $a_n \ll b_n \ll a_n$. For example, the above could be rewritten by ${\Phi_{|w|}(x)} \asymp {\Phi_{|w|}(y)}$ for all $w \in \cW^\infty$ and $x, y \in [w]$.
A further, stronger assumption on the variation is related to local Hölder continuity. Recall that the $n$-th variation of a function $f: \Sig \to \R$ is defined by
\[V_n(f)=\sup\{ |f(x)-f(y)|: x_i=y_i,\, i=0,1,2,\ldots,n-1 \}.\]
The function $f$ is referred to as a \emph{locally H\" older continuous function} if there exists
$0<r<1$ and $C\geq 1$ such that $V_n(f)\ll r^n$ for all $n\geq 1$. Moreover, we refer to a locally H\" older continuous function with $\|f\|_\infty < \infty$ as a \emph{H\" older continuous function}. We now recall a well-known estimate.
For $n \leq m$, $x,y \in [w]$ for some $w \in \cW^m$, and a locally Hölder continous function $f$,  
\begin{align} \label{eq:Hoelder_estimate}
\big|\sum_{k=0}^{n-1} f\circ \te^k(x) - f\circ \te^k(y) \big| 
\ll  \frac{1}{1-r} r^{m-n}. 
\end{align}
In particular, if $\log \varphi = f$ is locally Hölder continuous, then $\varphi$ is a potential of bounded variation. Moreover, as $r^{m-n} = d(\theta^n(x),\theta^n(y))$, there exists 
$C_\varphi \geq 1$ such that  
\[ |\Phi_w(x)/\Phi_w(y) - 1| \leq C_\varphi d(x,y) \quad \hbox{ and }  \quad \Phi_w(x)/\Phi_w(y)  \leq C_\varphi \]
for all $w \in \cW^\infty$ and $x,y \in [w]$.

\paragraph{Conformal measures.} In here, due to the fact that the constructions canonically lead to $\sigma$-finite measures, we will make use of a slightly more general definition of conformality by allowing infinite measures. We refer to a $\sigma$-finite Borel measure $\mu$ as a \emph{$\varphi$-conformal} measure if 
\[ \mu(\te(A)) = \int_A \frac{1}{\varphi}d\mu  \] 
 for all Borel sets $A$ such that $\te|_A$ is injective. For $w=(w_1\ldots w_{n}) \in \cW^{n}$ and a potential of bounded variation, it then immediately follows that  
\begin{equation}\label{eq:conformal_estimate} {\mu([w])}  \asymp {\Phi_n(x)}  \; \mu(\te([w_{n}]))   \end{equation} 
for all $x \in [w]$. Note that this estimate implies that $P_G(\te,\varphi)=0$ is a necessary condition for the existence of a conformal measure  with respect to a potential of bounded variation. Moreover, if $\mu(\te([w])) \asymp 1$ (e.g., if $\mu$ is finite and $\te$ has the big image property), we obtain that  
\begin{equation}\label{eq:Gibbs}   {\mu([w])} \asymp {\Phi_n (x)} \end{equation}
for all $n \in \N$, $w \in \cW^n$ and $x \in [w]$. Also note that a probability measure satisfying (\ref{eq:Gibbs})
is referred to as a \emph{$\varphi$-Gibbs measure}.

\paragraph{Ruelle's operator, b.i.p. and Gibbs-Markov maps.} Ruelle's operator is defined, for $f : \Sigma \to \R$ in a suitable function space to be specified later, by
\[ L_\varphi (f) = \sum_{v \in \cW}   \1_{\te([v])} \cdot \varphi\circ\tau_v  \; \cdot \;f\circ\tau_v. \]
Furthermore, there is an associated action on the space of $\sigma$-finite Borel measures defined through $\int f dL_\varphi^\ast(\nu) := \int L_\varphi(f) d \nu$, for each continuous $f: \Sig \to [0,\infty)$. We then have that $\nu$ is a $\varphi/\rho$ conformal measure if and only if  $L_\varphi^\ast (\nu) = \rho \nu$. 
If, in addition, there is a measurable function $h: \Sig \to [0,\infty)$ with $L_\varphi (h) = \rho h$, then $d\mu := h d\nu$ defines an invariant, $\sigma$-finite measure, that is $\mu = \mu \circ \te^{-1}$. Moreover, for $\varphi' := \varphi h/(\rho h \circ \te)$, we have $L_{\varphi'} (1)=1$. 
 
An important consequence of the b.i.p. property is a Perron-Frobenius-Ruelle theorem in case of an infinite alphabet $\cW^1$ (see \cite{MauldinUrbanski:2001,Sarig:2003a}). That is, if  $(\Sigma,\te)$ has the b.i.p. property, $\log \varphi$ is Hölder continuous and $\|L_\varphi(1)\|_\infty < \infty$, then there exists a Gibbs measure $\mu$ and a Hölder continuous, strictly positive eigenfunction $h$ of $L_\varphi$, which is uniformly bounded from above and below. 
Moreover, in this situation, $(\Sigma,\te,\mu)$ has the \emph{Gibbs-Markov property}, that is $\mu$ is a Borel probability measure, for all $w \in \cW^1$, $\mu$ and $\mu \circ \tau_w$ are equivalent, 
$\inf \left\{\mu(\te([w]))\with w \in \cW^1\right\} >0$ and 
there exists $0<r<1$ such that, 
for all $m,n \in \N$, $v \in \cW^m$, $w \in \cW^n$ with $(vw)\in \cW^{m+n}$,
\begin{equation}  \label{eq:GM} \sup_{x,y \in [w]} \left|  \log \frac{d\mu \circ \tau_v}{d\mu}(x) -  \log\frac{d\mu \circ \tau_v}{d\mu}(y) \right| \ll r^n. 
\end{equation}
As it is well known, the action on the space of bounded continuous functions of the transfer operator with respect to $\mu$ coincides with $L_{\varphi/\rho}$ and, with $h$ referring to the function given by the Perron-Frobenius-Ruelle, $h d\mu$ is an invariant probability measure with exponential decay of correlations and associated transfer operator given by  $L_{(\varphi h)/(\rho h \circ \te)}$
(see  \cite{AaronsonDenkerUrbanski:1993,Sarig:2003a}).  

Furthermore, several arguments in here are based on an inequality in the flavour of Doeblin-Fortet or Lasota-Yorke for arbitrary topological Markov chains $(\Sigma,\theta)$ and potentials $\varphi$ such that 
is  $\log \varphi$ is locally Hölder continuous. For $f : \Sigma \to \R$, define
\[ D(f): \Sigma \to [0, \infty), \; (x_1,x_2\ldots) \mapsto \sup_{y,\tilde{y} \in [x_1]}   \frac{\left|f(y)-f(\tilde{y})\right|}{d_r(y,\tilde{y})} . \]
That is, $D(f)(x)$ is the local Hölder coefficient of the function $f$ restricted to $[a]$, with $x \in [a]$.  
Now assume that $L^n_\varphi(f)$ is well-defined. Then, for $x,y$ in the same cylinder, 
\begin{align} 
& \nonumber \left| L^n_\varphi(f)(x) - L^n_\varphi(f)(y) \right| 
\\
& =  
\nonumber
\left| \sum_{v\in \cW^n} \left(1 - \frac{\Phi_{v}(y)}{\Phi_{v}(x)} \right) \Phi_{v}(x) f\circ{\tau_v}(x) 
 + \Phi_{v}(y) \left( f\circ{\tau_v}(x)  -  f\circ{\tau_v}(y) \right)  \right| 
\\ 
\nonumber &\leq C_\varphi L_\varphi^n(|f|)(x) d_r(x,y) + r^n L_\varphi^n(D(f))(y) d_r(x,y) \\ 
\label{eq:Doeblin-Fortet}
&\leq  
C_\varphi \, d_r(x,y) \,   L_\varphi^n\left(|f| + r^n D(f)\right)(x)   
\end{align}
If, in addition, for all $a \in \cW^1$, either $f(x)=0$ for all $x \in [a]$ or $|f(x)/f(y) -1| \leq C_a d_r(x,y)$ for all $x,y \in [a]$, 
set $LD(f)(x) := 0$ in the first case and $LD(f)(x) := \sup\{ |f(x)/f(y) -1|/ d_r(x,y) : x,y \in [a] \} $ in the second case. 
By the same arguments, 
\begin{align} 
& \nonumber \left| L^n_\varphi(f)(x) - L^n_\varphi(f)(y) \right| 
\label{eq:Doeblin-Fortet-log-Hoelder} \\
\nonumber
&\leq  
C_\varphi \, d_r(x,y) \,    L^n_\varphi\left(|f| \right)(x)  
+ C_\varphi  \sum_{v\in \cW^n} \Phi_{v}(x) \left| f\circ{\tau_v}(x)\right| \left|\frac{  f\circ{\tau_v}(y)} {  f\circ{\tau_v}(x)} -1 \right| \\
&\leq  C_\varphi \, d_r(x,y) \,  L^n_\varphi\left(|f|\left( 1 + r^n LD(f) \right)   \right)(x)   
\end{align}

\section{Group extensions of topological Markov chains.}
Fix a countable  group $G$ and a map $\psi: \Sig \to G$ such that $\psi$ is constant on $[w]$ for all $w \in \cW^1$. Then, for $X:= \Sig \times G$ equipped with the product topology of $\Sig$ and the discrete topology on $G$, 
the \emph{group extension or $G$-extension} $(X,T)$ of $(\Sig, \te)$ is defined by
\[ T: X\to X, (x,g) \mapsto (\te x, g \psi(x)).\]
Note that $(X,T)$ is a topological Markov chain with respect to the alphabet $\cW^1 \times G$ and the following transition rule:  
$((a,g),(b,h))$ is admissible if and only if $(ab) \in \cW^2$ and $g \psi(a) = h$, where $\psi(a) := \psi(x)$, for some $x \in [a]$. Furthermore, set $X_g:= \Sig \times \{g\}$ and  
 \[\psi_{n}(x) := \psi(x)\psi(\te x) \cdots \psi(\te^{n-1}x)\]
for $n \in \N$ and $x \in \Sig$. Observe that $\psi_{n}:\Sig \to G$ is constant on cylinders of length $n$ which implies that 
$\psi_k(w) := \psi_{k}(x)$, for some $x \in [w]$, $k \leq n$ and $w \in \cW^n$, is well defined. If $k=n$, we will write $\psi_w := \psi_{n}(w)$.
It is then easy to see that the finite words of $(X,T)$ can be identified with  $\cW^\infty \times G$ by  
\[ ((w_0,\ldots, w_{n}),g) \equiv ((w_0,g),(w_1,g\psi_1(w)),\ldots, (w_{n},g\psi_n(w))).  \]
Also observe that topologically transitivity of $(X,T)$  implies that $\{\psi(a)\with a \in \cW^1\}$ is a generating set for $G$ as a semigroup. 

Throughout, we now fix a topological mixing topological Markov chain $(\Sig, \te)$, and a topological transitive $G$-extension $(X,T)$. Furthermore, we fix a (positive) potential $\varphi:\Sig \to \R$ with $P_G(\te,\varphi)=0$. Note that $\varphi$ lifts to a potential  $\varphi^\ast$ on $X$ by setting $\varphi^\ast(x,g):= \varphi(x)$. For ease of notation, we will not distinguish between $\varphi^\ast$ and $\varphi$. Moreover, for $v \in \cW^\infty$, the inverse branch given by $[v,\cdot]$ will be as well denoted by $\tau_v$, that is $\tau_v(x,g) := (\tau_v(x),g\psi(v)^{-1})$.
In order to distinguish between the Ruelle operator of $\te$ and $T$, these objects for the group extension will be written in calligraphic letters. That is, for $a \in \cW$, $\xi \in [a]\times \{\id\}$, $(\eta,g)\in X$, and $n \in \N$,  
\begin{align*}
\cL(f)(\xi,g)  := \sum_{v \in \cW} \varphi(\tau_v(\xi)) f\circ\tau_v(\xi,g).
\end{align*}
%
%
%
\begin{remark}\label{rem:recurrent} In the context of topological transitivity, it is natural to ask whether $(X,T)$ is ergodic with respect to the product of the Gibbs measure on $\Sig$ and the counting measure. For example,  a classical result of Zimmer in \cite{Zimmer:1978a} (see also \cite{Jaerisch:2013a}) states that ergodicity of $(X,T)$ implies that $G$ is amenable, that is, there exists a sequence $(K_n)$ of finite subsets of $G$ with $\bigcup_n K_n = G$ such that 
  \[ \lim_{n \to \infty} |g K_n \triangle K_n|/ |K_n|=0 \quad \forall g \in G,\]
 where  $ \triangle$ refers to the symmetric difference and $|\cdot|$ to the cardinality of a set. Moreover, it was shown in \cite{Stadlbauer:2013} for this class of extensions that $P_G(T) = P_G(\te)$ implies that $G$ is amenable. Hence, if $G$ is a non-amenable group, then $P_G(T) < P_G(\te)$ and $(X,T)$ is not ergodic. In particular, by bounded distortion, $T$ has to be totally dissipative. For a further criterion for ergodicity, we also refer to corollary \ref{for:ergodic_iff_divergence_type} below. Also note that the classical result of Varopoulos on recurrent groups motivates the conjecture that a group extension only can be ergodic if $G$ is a finite extension of the trivial group, $\Z$ or $\Z^2$.  
\end{remark}

\paragraph{Symmetric extensions.} In several interesting applications, group extensions are satisfying a certain notion of symmetry. In here, we will use a pathwise notion (as in \cite{Stadlbauer:2013}) in contrast to the more general notion in \cite{Jaerisch:2014a}.  Namely, we say that $(\Sig,\te,\psi)$ is \emph{symmetric} if there exists  $\cW^1 \to \cW^1$, $w \mapsto w^\dagger$ with the following properties.
\begin{enumerate}
  \item For $w \in \cW^1$, $(w^\dagger)^\dagger=w$.
  \item  \label{def:2} For $v,w \in \cW^1$, the word $(vw)$ is admissible if and only if $(w^\dagger v^\dagger)$ is admissible.
  \item   $\psi(v^\dagger) = \psi(v)^{-1}$ for all $v \in \cW^1$.
\end{enumerate} 
Moreover, we refer to  $(\Sig,\te,\psi,\varphi)$ as a \emph{symmetric group extension} if $(\Sig,\te,\psi)$ is {symmetric} and, with  
$\dagger: \cW^\infty \to \cW^\infty$  defined by $(w_1\ldots w_n)^\dagger := (w_n^\dagger\ldots w_1^\dagger)$,
    \[ \sup_{n\in \N} \sup_{x \in [w], y \in [w^\dagger]} \frac{\Phi_n(x)}{\Phi_n(y)} < \infty.\] 


\section{Conformal $\sigma$-finite measures}
As a first step towards a Ruelle theorem for group extensions, we now adapt 
ideas from \cite{Patterson:1976,DenkerUrbanski:1991b} in order to obtain invariant measures for the dual of the Ruelle operator. In contrast to  \cite{Patterson:1976,DenkerUrbanski:1991b}, the method in here gives rise to conformal $\sigma$-finite measures, which seems to be advantageous as group extensions in many cases are totally dissipative dynamical systems and therefore might not admit finite invariant measures. 
We now fix $\xi\in \Sig$ and, for  $n\in \N$, set 
\[  \cZ^n(\xi)= \sum_{\theta^n(x)=\xi, \psi_{n}(x)= \id} \Phi_n(x) = \cL^n_\varphi(\1_{X_{\id}})(\xi,\id).\]
Since the construction relies on the divergence of a power series at its radius of convergence, recall that, for a sequence of positive real numbers $(a_n)$, the radius of convergence of $\sum_n a_n x^n$ is equal to $1/\rho$ where, by Hadamard's formula, 
\[ \rho := \limsup_{n\to \infty} \sqrt[n]{a_n}.\]
We now ensure divergence at the radius of convergence by pointwise multiplication 
with a slowly diverging sequence as given by the following result. For the proof, we refer to \cite{DenkerUrbanski:1991b}.

\begin{lemma} \label{lem:analytic_fact} For a positive sequence $(a_n)$ with $\rho<\infty$,
there exists a nondecreasing sequence $(b_n :  n \in  \N)$ with $b_n \geq 1$ for all $n \in \N$ such
that $\lim_{n \to \infty} {b_n}/{b_{n+1}}=1$ and for all $s \geq 0$,
\[\sum^\infty _{n=1}b_n a_n s^{-n} \; \begin{cases} = \infty &s \leq \rho \\ <\infty& s > \rho.  \end{cases}\]
Moreover, there exists a non-increasing sequence $(\lambda(n): n \in \N)$ with $\lambda(n)\geq 1$ and $\lambda(n) \to 1$ such that 
$ b_n = \prod_{k=1}^n \lambda(k)$.  
\end{lemma}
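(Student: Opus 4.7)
The plan is to follow the slowly-varying-factor construction of Patterson--Sullivan as in \cite{DenkerUrbanski:1991b}. Set $t_n := a_n \rho^{-n}$, so that Hadamard's formula gives $\limsup_n \sqrt[n]{t_n} = 1$. If $\sum_n t_n = \infty$ already, I would take $\lambda(n) \equiv 1$ (hence $b_n \equiv 1$); the three claims are then immediate from the root test applied to $\sum_n a_n s^{-n}$. So assume from now on that $\sum_n t_n < \infty$.

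Fix once and for all an auxiliary sequence $c_k \downarrow 1$ with $c_k > 1$, for instance $c_k := 1 + 1/k$. The key observation is that for every $k$,
\[ \limsup_n \sqrt[n]{c_k^n \, t_n} \;=\; c_k \;>\; 1, \]
so $\sum_{n \geq 1} c_k^{n} t_n = \infty$. Consequently one can inductively pick indices $0 = n_0 < n_1 < n_2 < \cdots$ such that, for every $k \geq 1$,
\[ \sum_{n = n_{k-1}+1}^{n_k} c_k^{\,n - n_{k-1}} \, t_n \;\geq\; 1. \]
I would then define
\[ \lambda(n) := c_k \text{ for } n_{k-1} < n \leq n_k, \qquad b_n := \prod_{j=1}^{n} \lambda(j). \]

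All required properties then follow by routine verification. The sequence $\lambda$ is nonincreasing, $\geq 1$, and tends to $1$ because $c_k \downarrow 1$; hence $b_n$ is nondecreasing with $b_n \geq 1$ and $b_n/b_{n+1} = 1/\lambda(n+1) \to 1$. Sub-exponential growth $\sqrt[n]{b_n} \to 1$ is the Cesàro mean of $\log \lambda(n) \to 0$, giving $\limsup_n \sqrt[n]{b_n a_n} = \rho$. For the dichotomy, the root test then yields convergence of $\sum_n b_n a_n s^{-n}$ for $s > \rho$ and divergence for $s < \rho$. At $s = \rho$, for $n \in (n_{k-1}, n_k]$ one has $b_n = b_{n_{k-1}}\, c_k^{n - n_{k-1}} \geq c_k^{n - n_{k-1}}$, so the $k$-th block of $\sum_n b_n t_n$ is bounded below by $1$ via the choice of $n_k$, and summing over $k$ yields the desired divergence at $s = \rho$.

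The only genuine input is the divergence $\sum_n c_k^n t_n = \infty$ for every $c_k > 1$, which is an immediate consequence of $\limsup_n \sqrt[n]{t_n} = 1$; everything else is bookkeeping. The main obstacle---modest in this setting---is reconciling $\lambda(n) \to 1$ (needed for sub-exponential growth of $b_n$, hence for preserving the radius of convergence) with blockwise contributions bounded below (needed for divergence at $s = \rho$). Both demands are met by fixing the $c_k$ first and then making each block $n_k - n_{k-1}$ long enough that the required block inequality holds.
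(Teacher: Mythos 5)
Your construction is correct and is essentially the standard Patterson-type argument of Lemma 3.1 in \cite{DenkerUrbanski:1991b}, which is exactly the proof the paper points to (the paper gives no proof of its own, only this reference): fix $c_k\downarrow 1$, use $\limsup_n\sqrt[n]{a_n\rho^{-n}}=1$ to make each block contribute at least $1$, and let $\lambda$ be blockwise constant. All the verifications you label as bookkeeping do go through, so there is nothing to add.
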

Now suppose that $\rho = \limsup \sqrt[n]{\cZ^n(\xi)} < \infty$. Then, for $(b_n)$ given by Lemma \ref{lem:analytic_fact} applied to $a_n = \cZ^n(\xi)$, we have that   
\begin{align*}
 \cP(s) :=\sum_{n \in \N} s^{-n} b_n \cZ^n(\xi).
 \end{align*}
diverges as $s \searrow \rho$. Furthermore, for $\rho < s < \infty$, set
\begin{align}\label{eq:normalization} m_s := \frac{1}{\cP(s)} \sum_{n \in \N} s^{-n} b_n \sum_{T^n(z) = (\xi,\id)} 
 \Phi_n(x) \delta_z,
 \end{align}
where  $\delta_z$ refers to the Dirac measure supported in $z$. Note that, by construction, $m_s(X_{\id})=1$ for all $s >\rho$. In order to construct a $\sigma$-finite, conformal measure, we consider an accumulation point $\nu$ of $\{m_s\}$ in the weak$^\ast$ topology, i.e. convergence of $\int f dm_{s}$ to $\int f d\nu$ for every bounded and continuous function $f$. For ease of notation, we now identify $\Sig$ with $X_{\id}$ and,  for $B \subset \Sig$  with $T^k|_{B \times \{\id\}}$  invertible and $T^k({B \times \{\id\}}) \subset X_{\id}$, the restriction $T^k|_{B \times \{\id\}}$  with $\te^k|_B$.

\begin{lemma} \label{lemma:conf}
Assume that, for $s_l \searrow \rho$, there exists a probability measure $m$ on $\Sig$ which is the weak$^\ast$-limit of  $(m_{s_l}:l \in \N)$. Then, for each pair $(B,k)$ with $B \in \cB(\Sig)$, $k \in \N$ such that  $T^k|_{B \times \{\id\}}$ is invertible and
$T^k(B \times \{\id\})\subset X_{\id}$,  
\begin{align} \label{eq:conf1} m(\te^k(B)) = \int_B \rho^k/\Phi_k dm. \end{align}   
\end{lemma}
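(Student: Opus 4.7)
The plan is to verify the conformality first at the level of the approximating measures $m_s$, with an error that vanishes as $s \searrow \rho$, and then pass to the weak$^\ast$ limit along $s_l$. The main algebraic input is a reindexing of the defining sum for $m_s(\te^k(B))$. Since $T^k|_{B \times \{\id\}}$ is invertible with image in $X_{\id}$, the map $T^k$ is a bijection from $B \times \{\id\}$ onto $\te^k(B) \times \{\id\}$; in particular, every preimage $z \in \te^k(B) \times \{\id\}$ of $(\xi,\id)$ under $T^n$ lifts uniquely to some $y \in B$ with $T^{n+k}(y,\id) = (\xi,\id)$, and the chain rule yields $\Phi_n(z) = \Phi_{n+k}(y)/\Phi_k(y)$. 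Writing $N = n+k$, the definitions unfold to
$$m_s(\te^k(B)) = \frac{s^k}{\cP(s)} \sum_{N \geq k} s^{-N} b_{N-k} \sum_{\substack{T^N(y,\id)=(\xi,\id) \\ y \in B}} \frac{\Phi_N(y)}{\Phi_k(y)},$$
whereas $s^k \int_B \Phi_k^{-1}\, dm_s$ is the analogous double sum with $b_N$ in place of $b_{N-k}$ and without the restriction $N \geq k$.

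Subtracting, the difference splits into (a) a finite sum over $0 \leq N < k$, which is bounded independently of $s$ and hence killed by the factor $1/\cP(s) \to 0$, and (b) the tail $\frac{s^k}{\cP(s)} \sum_{N \geq k} s^{-N}(b_{N-k} - b_N)(\cdots)$. By the representation $b_N = \prod_{j=1}^N \la(j)$ from Lemma \ref{lem:analytic_fact} with $\la(j) \to 1$, for any $\ve > 0$ one has $|b_{N-k} - b_N| \leq \ve b_N$ once $N$ is large. One may also reduce to $B \subset [v]$ for a single length-$k$ cylinder $[v]$ with $\psi_k(v) = \id$, by countable additivity along the partition of $\Sig$ into length-$k$ cylinders; this is legitimate because injectivity of $T^k|_{B\times\{\id\}}$ forces $\te^k(B \cap [v_1])$ and $\te^k(B \cap [v_2])$ to be disjoint for distinct $v_1, v_2$. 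On such $B$, bounded distortion gives $\sup_B \Phi_k^{-1} < \infty$, and combined with $\sum_{T^N(y,\id)=(\xi,\id),\, y \in B} \Phi_N(y) \leq \cZ^N(\xi)$ together with $\sum_N s^{-N} b_N \cZ^N(\xi) = \cP(s)$, the tail error is dominated by $\ve \cdot s^k \cdot \sup_B \Phi_k^{-1}$, arbitrarily small. Hence $m_s(\te^k(B)) - s^k \int_B \Phi_k^{-1}\, dm_s \to 0$ as $s \searrow \rho$.

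To pass to the limit along $s_l$, consider first $B = [w]$ a subcylinder of $[v]$: then $\te^k([w])$ is clopen and $\1_{[w]}/\Phi_k$ is continuous, so weak$^\ast$ convergence of $m_{s_l}$ to $m$ yields $m_{s_l}(\te^k([w])) \to m(\te^k([w]))$ and $\int_{[w]} \Phi_k^{-1} dm_{s_l} \to \int_{[w]} \Phi_k^{-1} dm$, establishing (\ref{eq:conf1}) for $B = [w]$. Extension to arbitrary Borel $B \subset [v]$ follows by a $\pi$--$\lambda$ argument, since both sides of (\ref{eq:conf1}) define finite Borel measures on $[v]$ agreeing on the $\pi$-system of subcylinders. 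The main obstacle is the mismatch between $b_{N-k}$ and $b_N$: the weights $b_N$ are inserted precisely to force divergence of $\cP(s)$ at $s = \rho$, but they simultaneously spoil exact conformality at finite $s$; the slow-variation property $b_{N-k}/b_N \to 1$ of Lemma \ref{lem:analytic_fact} is exactly what reconciles the two and makes the conformality exact in the limit.
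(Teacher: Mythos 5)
Your proof is correct and follows essentially the same route as the paper's: the reindexing $N=n+k$ with the chain rule for $\Phi$, the splitting of the error into a finite initial segment killed by $1/\cP(s)\to\infty$ and a tail controlled by $b_{N-k}/b_N\to 1$, the passage to the limit on clopen cylinders via weak$^\ast$ convergence, and the extension to Borel sets since cylinders generate. Your explicit reduction to a single length-$k$ cylinder and the $\pi$--$\lambda$ argument merely spell out details the paper leaves implicit.
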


\begin{proof} Suppose that $B$ is a cylinder, that is $B=[w]$ for some $w \in \cW^m$ and $m>k$.    
Since $T^k$ is injective on $B \times \{\id\}$, we have, for $s > \rho$, that
\begin{align*} 
  m_{s}(\te^k(B))  &= \frac 1{\cP(s)} \sum_{n\in \N} \sum_{x\in \te^n(B) \cap E_n} \frac{b_n {\Phi_n(x)}}{s^{n}}
  = \frac 1{\cP(s)} \sum_{n\in \N} \sum_{x\in B\cap \te^{-k}(E_n)} \frac{b_n {\Phi_n(\te^k x)}}{s^{n}}\\
 &= \frac 1{\cP(s)} \sum_{n\in \N} \sum_{x\in B\cap E_{n+k}} \frac{b_{n+k}\Phi_{n+k}(x)}{s^{n+k}} \frac{b_ns^{k}}{b_{n+k} \Phi_{k}(x)}
 \end{align*}
In particular this gives
\begin{align*}
\left|  m_{s}(\te^k(B)) - \int_B \frac{s^{k}}{\Phi_{k}(x)}  d m_{s}\right| 
& \le  \frac{1}{\cP(s)} \sum_{n \in \N} \left|{\textstyle \frac{b_n}{b_{n+k}}-1} \right| \sum_{x\in B\cap E_{n+k}} \frac{b_{n+k} \Phi_{n+k}(x)}{s^{n+k}}\\
& + \frac{1}{\cP(s)}  \sum_{ n=1}^k \sum_{x\in B\cap E_{n}} b_n \Phi_{k-n}(\te^n(x))s^{k-n}.
\end{align*}  
By Lemma \ref{lem:analytic_fact}, it follows that $\lim \cP(s_l) = \infty$, and hence the second term of the right hand side tends to zero as $l \to \infty$. Since $\lim_{n \to \infty}b_n/b_{n+k} = 1$, we then obtain that the first summand also tends to zero. Moreover, by applying the Portmanteau 
theorem to the open and closed set $[w]$, it follows that (\ref{eq:conf1}) holds for $[w]$. As $\cB(\Sig)$ is generated by cylinders, the lemma follows.
\qed \end{proof}
As it seems to be impossible to show the existence of a weak$^\ast$-accumulation point of $(m_s)$ in full generality, the following condition is introduced. 
\begin{definition}
We say that the group extension $(\Sig, \te ,\varphi)$ satisfies property {(C)} if there exists $(b_n)$ as in Lemma \ref{lem:analytic_fact} and $(s_k)$ with $s_k \searrow \rho$ such that $(m_{s_k})$ converges weakly$^\ast$ to some probability measure on $X_{\id}$ as $k \to \infty$.     
\end{definition}
In order to obtain criteria for property {(C)}, recall that Prohorov's theorem states that a sequence $(m_{s_k})$ has a  weak$^\ast$-accumulation point if and only if for each $\epsilon>0$ there exists a compact set $K$ and $k_0 \in \N$ such that $m_{s_k}(K)\geq 1-\epsilon$ for all $k \geq k_0$, or in other words, if $(m_{s_k})$ is tight. In particular, if $\Sig$ is a subshift of finite type, then the property is always satisfied. 
By lifting the limit from $\Sig$ to $X$ as in \cite{DenkerYuri:2000,DenkerKiferStadlbauer:2008} we arrive at a conformal, not necessarily finite measure for $T$. 

\begin{theorem} \label{theo:conformal_general} Assume that $(\Sig, \te)$ satisfies the b.i.p. property, $\log \varphi$ is Hölder continuous, $\|L_\varphi(\1) \|_\infty <\infty$ and that $(X,T)$ be a topologically transitive group extension 
with property $(C)$. Then there exists a $\sigma$-finite, nonatomic, $(\rho/\varphi)$-conformal measure $\nu$ with $\nu(X_g)< \infty$, for each $g \in G$. 
Furthermore, there exists a sequence $(s_k)$ with $s_k \searrow \rho$, such that, for each non-negative, continuous function $f: X \to \R$,
\begin{align} \label{eq:explicit}\int f d\nu = \lim_{k \to \infty} \frac{1}{\cP(s_k)}
 \sum_{n\in \N} {b_{n}} s_k^{-n} (\cL_{\varphi}^n f)(\xi,\id). \end{align}
\end{theorem}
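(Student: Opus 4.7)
The plan is to extract from $(m_s)_{s > \rho}$ a subsequence (refining the one given by property $(C)$) along which $m_{s_k}$ converges weak$^\ast$ on \emph{all} of $X = \Sig \times G$, not only on $X_{\id}$, and then to read off conformality and the explicit formula (\ref{eq:explicit}) simultaneously from the limit. The argument rests on the identity, valid for every non-negative continuous $f:X\to\R$,
\[\int f\, dm_s \;=\; \frac{1}{\cP(s)}\sum_{n\in\N} b_n s^{-n}\,\cL_\varphi^n(f)(\xi,\id),\]
which is just a rewriting of (\ref{eq:normalization}) using $\varphi^\ast(x,g)=\varphi(x)$; passing to the limit in it yields (\ref{eq:explicit}) once the desired subsequence has been produced.

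The technical heart is to promote the tightness on $X_{\id}$ granted by property $(C)$ to tightness on every fibre $X_g$. I would first establish the uniform bound $m_s(X_g)\le C'_g$ for $s$ in a right neighbourhood of $\rho$ and each $g \in G$. Note that $\cL_\varphi^n(\1_{X_g})(\xi,\id)$ is the weighted sum over all $w\in\cW^n$ with $\psi_w = g^{-1}$ and $(w\xi)$ admissible. Topological transitivity of $(X,T)$ combined with the b.i.p.\ property allows one to construct, for each $g$, an integer $K_g$ and a finite family $\{v^{(\beta)}:\beta\in\cI_{\textrm{\tiny bip}}\}$ of admissible words with $\psi_{v^{(\beta)}} = g$ and $|v^{(\beta)}|\le K_g$, such that for every such $w$ the concatenation $v^{(\beta)}w$ is admissible for a $\beta$ chosen from the b.i.p.\ set according to $w_1$. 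Together with the Hölder bound (\ref{eq:Hoelder_estimate}) on $\Phi$ this produces
\[\cL_\varphi^n(\1_{X_g})(\xi,\id) \;\le\; C_g \sum_{k=1}^{K_g} \cL_\varphi^{n+k}(\1_{X_{\id}})(\xi,\id).\]
Shifting indices in the sum defining $m_s(X_g)$ and invoking $b_n/b_{n+k}\to 1$ from Lemma \ref{lem:analytic_fact} then yields $m_s(X_g) \le C'_g$ uniformly, and the same comparison applied on the complement of $K \times \{g\}$ for compact $K \subset \Sig$ transfers the tightness from $X_{\id}$ to every fibre.

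Because $G$ is countable, a diagonal argument now furnishes a single subsequence $s_k \searrow \rho$ along which $m_{s_k}|_{X_g} \to \nu_g$ weak$^\ast$ on $X_g$ for every $g\in G$; set $\nu := \sum_g \nu_g$. By construction $\nu$ is a $\sigma$-finite Borel measure with $\nu(X_g)<\infty$. Conformality is verified exactly as in Lemma \ref{lemma:conf}: for any cylinder $B = [w]\times\{g\}$ on which $T^k$ is injective and maps into a single fibre $X_h$, the same telescoping calculation combined with $b_n/b_{n+k}\to 1$ gives $\nu(T^k(B))=\int_B \rho^k/\Phi_k\, d\nu$, and the monotone class theorem extends this to arbitrary Borel sets of the same type. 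The formula (\ref{eq:explicit}) is then immediate from weak$^\ast$ convergence in the identity above. Nonatomicity of $\nu$ follows from (\ref{eq:conformal_estimate}): the mass of $[w_1\cdots w_n]\times\{g\}$ is bounded by $C\,\Phi_n(x)\,\nu(X_{g\psi_w})$, and $\Phi_n(x)\asymp \mu([w_1\cdots w_n])\to 0$ by the Gibbs property of the base system, which is available thanks to b.i.p., Hölder $\log\varphi$ and $\|L_\varphi(\1)\|_\infty<\infty$.

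The main obstacle I anticipate is the uniform comparison between $\cL_\varphi^n(\1_{X_g})$ and $\cL_\varphi^{n+k}(\1_{X_{\id}})$: producing connecting words $v^{(\beta)}$ of bounded length that remain admissible regardless of the first symbol of the preimage $w$, while keeping the Hölder distortion factors uniformly bounded, is the one place where topological transitivity of $(X,T)$, the b.i.p.\ property and the Hölder control on $\varphi$ must all be combined in a single construction. The remaining steps---the diagonal weak$^\ast$ extraction, the transcription of Lemma \ref{lemma:conf} to cylinders in arbitrary fibres, the derivation of (\ref{eq:explicit}) and the nonatomicity argument---are essentially routine once this comparison is available.
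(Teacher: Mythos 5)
Your argument is correct, and it reaches the conclusion by a genuinely different route in one structural respect. The paper never establishes tightness of $(m_s)$ on the fibres $X_g$ with $g\neq\id$: it takes the single weak$^\ast$ limit $m$ on $X_{\id}$ provided by property (C), proves conformality there (Lemma \ref{lemma:conf}), and then \emph{defines} $\nu$ on each cylinder $[b,g]$ by pulling $m$ back through the dynamics, $\int_{[b,g]}f\,d\nu:=\int_{[u]}f\circ\te^j\,\rho^j/\Phi_j\,dm$ for a connecting word $u$ with $T^j([u,\id])=[b,g]$, so that conformality on all of $X$ is built into the construction and the only things left to check are well-definedness, the representation (\ref{eq:explicit}) (by re-running the telescoping computation of Lemma \ref{lemma:conf}), and $\nu(X_g)<\infty$ via the big-preimages property. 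You instead promote tightness to every fibre and extract, by a diagonal argument over the countable group, a single subsequence converging weak$^\ast$ on all of $X$, then re-verify conformality fibre by fibre. The key estimate is the same in both treatments --- connecting words of bounded length supplied by transitivity of $(X,T)$ together with b.i.p.\ and bounded distortion, giving $\cL_\varphi^n(\1_{X_g})(\xi,\id)\ll\sum_{k\le K_g}\cZ^{n+k}(\xi)$; the paper uses it to prove $\nu(X_g)<\infty$ (and again in Proposition \ref{prop:measure}), you use it for the uniform mass bound and the tightness transfer. Your route costs a little more work up front (in the tightness transfer you must take the compact set $K=\bigcup_v\te^{|v|}([v]\cap K_0)$ built from the compact set $K_0$ witnessing tightness on $X_{\id}$, so that $\tau_v(K^c)\subset K_0^c$ for every connecting word $v$ --- worth writing out, since $\tau_v(K^c)$ is not itself the complement of a compact set), but it buys the explicit formula (\ref{eq:explicit}) immediately from weak$^\ast$ convergence, whereas the paper must derive it a posteriori from its pullback definition. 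Both arguments are equally valid; one small caveat applying to yours and to the paper alike is that (\ref{eq:explicit}) for an arbitrary non-negative continuous $f$ (unbounded, or charging infinitely many fibres) requires an extra monotone-approximation step beyond weak$^\ast$ convergence against bounded continuous functions.
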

Before giving the proof, recall that the conditions on $(\Sig, \te ,\varphi)$ are equivalent to the existence of a probability measure $\mu$ such that $(\Sig, \te ,\mu)$ is a Gibbs-Markov map with the b.i.p. property. Hence, the above theorem holds in verbatim for  topologically transitive group extensions of Gibbs-Markov maps, with $\mu$ playing the rôle of a reference measure. 

\begin{proof} 

By property (C), there exists  $s_k \searrow \rho$ and $m$ such that $m$ is the  weak$^\ast$-limit of $(m_{s_k}:k \in \N)$.
 Using  equation (\ref{eq:conf1}) in Lemma \ref{lemma:conf}, we extend $m$ 
to a measure $\nu$ on $\cB(X)$ as follows. For $b \in \cW^1$ and $g \in G$, there exists by transitivity $j \in \N$ and $u \in \cW^{j+1}$ with $T^j([u,\id ]) = [b, g]$. The restriction of $\nu$ on $[b, g]$ is now defined by 
\[\int_{[b,g]} f(x) d\nu(x,h) := \int_{[u]} f\circ \te^j \rho^j/\Phi_j  d m, \]
for each bounded and continuous function $f: \Sig \to \R$. In particular, if $f$ is supported on $[b]$, then by the same arguments as in the proof of Lemma \ref{lemma:conf}, 
\begin{align*}
& \int_{X_g} f(x) d\nu(x,h) = \int_{[u]} f\circ \te^j \rho^j/\Phi_j  d m\\
 = & \lim_{k\to \infty} \frac{1}{\cP(s_k)}\sum_{n\in \N}{b_n}  \rho^j s_k^{-n} \sum_{x \in E_n\cap [u]} { f \circ \te^j(x)}(\Phi_j(x))^{-1} \cdot {\Phi_n(x)}\\
= & \lim_{k \to \infty} \frac{1}{\cP(s_k)}
 \sum_{n>j} {b_{n-j}} s_k^{j-n} \sum_{(y,g)\in T^{j-n}(\{(\xi,\id)\}) \cap  [b,g]} \Phi_{n-j}(y) f (y).
\end{align*}
This proves equation (\ref{eq:explicit}). Finally, using the construction of $\nu$ from $m$ and the big preimages property, it easily can be seen that $\nu(X_g)<\infty$ for each $g \in G$.
\qed\end{proof}

We now collect several immediate consequences from  conformality and the b.i.p.-property in the base.

\begin{proposition} \label{prop:measure} For the measure $\nu$ given by Theorem \ref{theo:conformal_general}, the following holds.
\begin{enumerate}
  \item If $\lim_n \cZ^n(\xi)\rho^{-n} = 0$, then $\nu(X)=\infty$.
  \item If $L_{\varphi}(1)=1$, then $ d\nu\circ T^{-1} = \rho^{-1} d\nu$.  
  \item For $w\in \cW^n$, $x \in [w]$ and $g \in G$, we have $ \rho^n \nu([w,g]) \asymp \Phi_n(x) \nu(X_{g\psi_n(x)}) $.
  \item If the extension is symmetric, then 
  \[ \nu(X_g) \asymp \nu(X_{g^{-1}}), \; \nu([w^\dagger, \psi_w^{-1}g^{-1} \psi_w ]) \asymp \nu([w,g]).\]
 \end{enumerate}
\end{proposition}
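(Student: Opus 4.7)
I would prove the parts in the order (ii), (iii), (iv), (i); all four rest on the dual identity $\int \cL^n F\,d\nu = \rho^n\int F\,d\nu$ (i.e., $\rho$-conformality of $\nu$), bounded distortion of $\Phi_n$, and the b.i.p.~property.

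Part (ii) is immediate. Since $L_\varphi(\1) = 1$ gives $\cL(\1)(x, g) = L_\varphi(\1)(x) = 1$, the telescope
\[\cL(f \circ T)(x, g) = \sum_{v \in \cW}\varphi(\tau_v x)\, f\bigl(T\circ\tau_v(x, g)\bigr) = f(x, g)\,\cL(\1)(x, g) = f(x, g)\]
converts conformality into $\int f\circ T\,d\nu = \rho^{-1}\int f\,d\nu$ for every continuous $f\geq 0$, i.e.\ $\nu\circ T^{-1} = \rho^{-1}\nu$.

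For (iii), write $\rho^n \nu([w, g]) = \int \cL^n(\1_{[w, g]})\,d\nu$ and observe that the only inverse branch of $T^n$ landing in $[w, g]$ is $\tau_w$, so
\[\cL^n(\1_{[w, g]})(y, h) = \Phi_w(y)\,\1_{\te^n[w]}(y)\,\1_{\{g\psi_w\}}(h).\]
Bounded distortion yields $\Phi_w(y)\asymp \Phi_n(x)$ for any $x\in[w]$, hence $\rho^n\nu([w, g]) \asymp \Phi_n(x)\,\nu(\te^n[w]\cap X_{g\psi_w})$. It remains to show $\nu(\te^n[w]\cap X_h)\asymp \nu(X_h)$ uniformly in $h, w, n$: b.i.p.~gives $\te^n[w]\supset \te[w_n] \supset [\beta]$ for some $\beta\in\cI_{\textrm{bip}}$, and conformality combined with $\|L_\varphi(\1)\|_\infty<\infty$ and the finiteness of $\cI_{\textrm{bip}}$ produces $\nu([\beta, h])\asymp \nu(X_h)$ uniformly in $h\in G$. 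For (iv), applying (iii) to $[w, g]$ and $[w^\dagger, \psi_w^{-1}g^{-1}\psi_w]$ (using $\psi_{w^\dagger} = \psi_w^{-1}$) gives the respective estimates $\Phi_n(x)\,\nu(X_{g\psi_w})$ and $\Phi_n(y)\,\nu(X_{(g\psi_w)^{-1}})$, so the second claim reduces to the symmetric-extension bound $\Phi_n(x)\asymp\Phi_n(y)$ and the first claim $\nu(X_h)\asymp\nu(X_{h^{-1}})$. The latter follows from the explicit formula (\ref{eq:explicit}) applied to $f = \1_{X_g}$, giving $\nu(X_g) = \lim_k \cP(s_k)^{-1}\sum_n b_n s_k^{-n}\,\cZ^n_{g^{-1}}(\xi)$ with the natural extension $\cZ^n_h(\xi) := \sum_{w\in\cW^n,\,\psi_w = h,\, (w\xi)\text{ adm}}\Phi_n(\tau_w\xi)$: the bijection $w\leftrightarrow w^\dagger$ on $\cW^n$ swaps $\psi_w = g$ with $\psi_{w^\dagger} = g^{-1}$, reverses admissibility, and preserves $\Phi$-mass, so $\cZ^n_g(\xi)\asymp\cZ^n_{g^{-1}}(\xi)$ (with $\xi$-admissibility absorbed by prepending a bip-letter).

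For (i), summing the estimate of (iii) with $g = \id$ over $w\in\cW^n$ and absorbing admissibility constants into bip-constants yields
\[\rho^n = \rho^n\nu(X_\id) \asymp \sum_{h \in G}\nu(X_h)\,\cZ^n_{h^{-1}}(\xi).\]
Under the hypothesis the $h = \id$ summand $\nu(X_\id)\cZ^n(\xi) = o(\rho^n)$, so $\sum_{h\neq\id}\nu(X_h)\cZ^n_{h^{-1}}(\xi)/\rho^n\asymp 1$ for large $n$. The critical tool is the convolution identity
\[\cZ^{2n}(\xi) \asymp \sum_{g\in G}\cZ^n_g(\xi)\,\cZ^n_{g^{-1}}(\xi),\]
obtained by splitting a length-$2n$ return word at its midpoint, with admissibility handled by bip and $\Phi$-mass by the cocycle identity $\Phi_{uv}\asymp \Phi_u\cdot\Phi_v$. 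Together with $\cZ^{2n}(\xi)/\rho^{2n}\to 0$ this forces $\cZ^n_g(\xi)\cZ^n_{g^{-1}}(\xi)/\rho^{2n}\to 0$ for every $g$, and a transitivity argument (pick $m$ with $\cZ^m_{g^{-1}}(\xi)>0$ and use $\cZ^n_g(\xi)\cZ^m_{g^{-1}}(\xi)\lesssim\cZ^{n+m}(\xi)$) upgrades this to $\cZ^n_g(\xi)/\rho^n\to 0$ pointwise in $g$. Supposing $\nu(X)<\infty$ for contradiction, for any $\eps>0$ choose a finite $F\subset G$ with $\sum_{h\notin F}\nu(X_h)<\eps$; the finite-$F$ part of the sum vanishes as $n\to\infty$, while the tail is dominated by $\eps\,\sup_g\cZ^n_g(\xi)/\rho^n$, yielding $1\lesssim\eps$, a contradiction. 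The principal obstacle will be a uniform-in-$n$ bound on $\sup_g\cZ^n_g(\xi)/\rho^n$ (pointwise vanishing in $g$ is not enough), which I expect to extract from another Cauchy--Schwarz-style application of the convolution identity.
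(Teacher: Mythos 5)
Your parts (ii)--(iv) essentially reproduce the paper's arguments. Part (ii) is the paper's re-indexing of the defining series, packaged through the identity $\cL_\varphi(f\circ T)=f\cdot\cL_\varphi(\1)$, which is if anything cleaner. Part (iii) is the paper's proof: conformality plus bounded distortion reduce everything to the uniform estimate $\nu([a,h])\asymp\nu(X_h)$ for $a$ in the big-images set; note, though, that the lower bound there genuinely requires topological transitivity of the \emph{extension} $T$ (to produce a connecting word $w$ with $awb$ admissible and $\psi_{awb}=\id$), an ingredient you elide when you attribute the estimate to ``conformality, $\|L_\varphi(\1)\|_\infty<\infty$ and finiteness of $\mathcal{I}_{\textrm{\tiny bip}}$''. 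Part (iv) is likewise the paper's argument: the dagger map applied inside (\ref{eq:explicit}), with admissibility to $\xi$ restored by a connecting word of trivial $\psi$-value chosen from a finite set, and the second claim deduced from the first together with (iii).

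Part (i) is where the proposal breaks down. The paper proves it in one line by applying (\ref{eq:explicit}) to $f=\1$: since $P_G(\te,\varphi)=0$ and the base satisfies the Ruelle--Perron--Frobenius theorem, $\cL^n_\varphi(\1)(\xi,\id)=L^n_\varphi(\1)(\xi)\asymp 1$, so the numerator of $\nu(X)=\lim_k\cP(s_k)^{-1}\sum_n b_n s_k^{-n}L^n_\varphi(\1)(\xi)$ is comparable to $\sum_n b_n s_k^{-n}$, while $\cP(s_k)=\sum_n b_n s_k^{-n}\cZ^n(\xi)$; since $\cZ^n(\xi)\le L^n_\varphi(\1)(\xi)$ forces $\rho\le 1$, the hypothesis $\cZ^n(\xi)=o(\rho^n)$ makes the ratio diverge as $s_k\searrow\rho$. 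Your route through (iii), a midpoint convolution identity and a finite-versus-tail decomposition is a long detour and, as you yourself flag, incomplete: the contradiction requires $\sup_{g,n}\cZ^n_g(\xi)\rho^{-n}<\infty$, and the convolution inequality only yields $\cZ^n_g(\xi)\,\cZ^n_{g^{-1}}(\xi)\ll\rho^{2n}$, which controls $\cZ^n_g$ alone only if $\cZ^n_g\asymp\cZ^n_{g^{-1}}$ --- that is, under the symmetry hypothesis, which part (i) does not assume. (The missing bound can be rescued under the contradiction hypothesis $\nu(X)<\infty$, since (iii) itself gives $\cZ^n_g(\xi)\ll\rho^n\nu(X_{g^{-1}})\le\rho^n\nu(X)$; but that is not the ``Cauchy--Schwarz'' repair you anticipate, and the direct argument from (\ref{eq:explicit}) renders the whole machinery unnecessary.)
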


\begin{proof} The first assertion follows from (\ref{eq:explicit}) applied to $f=1$. In order to prove part 2, note that  $L_{\varphi}(1)=1$ implies that $\cL_{\varphi}(1)=1$. Hence, for $f \in L^1(\nu)$, we have 
\begin{align*}
\frac{1}{\cP(s)}
 \sum_{n\in \N} {b_{n}} s^{-n} (\cL_{\varphi}^n f\circ T)(\xi,\id)  & = 
 \frac{1}{\cP(s)}
 \sum_{n\in \N} {b_{n}} s^{-n} (\cL_{\varphi}^{n-1} f)(\xi,\id) \\
 & =  \frac{s^{-1}}{\cP(s)}  \sum_{n\in \N} \frac{b_n}{b_{n-1}}{b_{n-1}} s^{-n+1} (\cL_{\varphi}^{n-1} f)(\xi,\id). \end{align*}
Since $\cP(s) \nearrow \infty$ as $s \to \rho$ and $\lim_n {b_n/b_{n-1}}=1$ as $s \to \infty$, 
we obtain that $ \int f\circ T d\nu= \rho^{-1} \int f  d\nu$. Part 3 is a consequence of conformality and the b.i.p. property. Namely, by \eqref{eq:conformal_estimate},
\[ \rho^n \nu([w,g]) \asymp  \Phi_n(x) \nu\left(\te^n([w]) \times \{g\psi_n(x)\}\right) \leq   \Phi_n(x)  \nu\left(X_{g\psi_n(x)}\right) . \]
Furthermore, by the big images property, there exists $a \in \mathcal{I}_{\textrm{\tiny bip}}$ such that $[a] \subset \te^n([w])$. Hence, it remains to show that $\nu([a,h]) \asymp \nu(X_h)$ for all $h \in G$. By the big preimages property, for each $y \in \Sigma$, there exists $b \in \mathcal{I}_{\textrm{\tiny bip}}$ 
such that $y \in \theta([b])$. Hence, by transitivity of $T$, there exists a finite word $w$ such that $awb$ is admissible and $\psi_{awb}=\id$. Hence, $\nu([a,h]) \geq \nu([awb,h]) \asymp \nu(\te([b])\times \{h\})$ with respect to a constant only depending on $awb$, which implies that 
\[  |\mathcal{I}_{\textrm{\tiny bip}}| \nu([a,h]) \gg \sum_{b \in \mathcal{I}_{\textrm{\tiny bip}}} 
 \nu(\te([b])\times \{h\}) \geq \nu(X_h). \]    
The proof of the remaining assertion relies on a similar argument.
For each $w \in \cW^\infty$ with $\psi_w = g$ and $\xi \in \te^{|w|}([w])$, there exists by transitivity a finite word $u$ such that such that $wu$ is admissible, $\xi \in  \te^{|w| +|u|}([w^\dagger u])$ and $\psi_u = \id$. As $\mathcal{I}_{\textrm{\tiny bip}}$ is finite, $u$ can be chosen from a finite set. Hence, by the definition 
of $\nu$ and the symmetry of $\varphi$, we have $ \nu(X_{g^{-1}}) \ll \nu(X_g)$ which implies that  $ \nu(X_{g^{-1}}) \asymp \nu(X_g)$. The second assertion  follows from this and part 3.
\qed \end{proof}

\section{The Ruelle-Perron-Frobenius theorem for group extensions}

In order to prove the existence of eigenfunctions for the Ruelle operator, we make use of a well-known idea from hyperbolic geometry (see \cite{Patterson:1976,Sullivan:1979}): As the reference point for the construction  in theorem \ref{theo:conformal_general} was chosen arbitrarily, there exists a family $\{\nu_\zeta : \zeta \in X\}$ of conformal measures. It is then relatively easy to show that $\zeta \mapsto d\nu_{\zeta}/d\nu$ defines an eigenfunction, provided that $\{\nu_\zeta\}$ is a family of pairwise equivalent measures. 
In here, this approach is partially generalized by constructing a conformal measure $\nu_\mu$ for a given probability measure $\mu$ on $X$. In order to do so, recall that the Vaserstein distance $W$ of two probability measures $\mu, \tilde\mu$ is a metric compatible with the weak convergence and is equal to, by Kantorovich's duality,  
\[ W(\mu, \tilde\mu) = \sup \left\{ \int f  d(\mu-\tilde\mu): D(f)\leq 1 \right\},\]
where $D(f) := \sup\{ |f(\zeta)-f(\tilde \zeta)|/d(\zeta,\tilde \zeta) : \zeta,\tilde \zeta \in X\}$ denotes the Lipschitz coefficient with respect to the metric defined by $d((x,g),(y,g)) = d_r(x,y)$ and $d((x,g),(x,h)) = 1$ for $g \neq h$. In the following theorem, $\nu$ refers to the $\sigma$-finite, conformal measure on $X$ given by theorem \ref{theo:conformal_general} with respect to some fixed base point in $X_{\id}$.

\begin{theorem}\label{theo:family_of_measures-2} Let $(X,T)$ be a topologically transitive group extension with property (C) of a Gibbs-Markov map with the b.i.p. property. Then there exists a sequence $(s_k)$ with $s_k \searrow \rho$ such that for each  $\mu \in \mathcal{M}(X)$, 
  \begin{align} \label{eq:explicit_2}
  \nu_\mu := \lim_{k \to \infty} \frac{1}{\cP(s_k)}
 \sum_{n\in \N} {b_{n}} s_k^{-n}  (\cL_{\varphi}^n)^\ast (\mu) \end{align}
exists. Furthermore,  $\{ \nu_\mu : \mu \in \mathcal{M}(X) \}$ is a family of pairwise equivalent measures and for the Radon-Nikodym derivative $\mathbb{K}:\mathcal{P}(X) \times X \to \R$, $(\mu,z) \mapsto ({d\nu_\mu}/{d\nu})(z)$,  we have the following.
\begin{enumerate}
\item \label{enum:Hölder} There exists $D>0$ such that for $\nu$-a.e. $z \in X$, $g \in G$ and probability measures $\mu_1,\mu_2$ supported on $X_g$,
\[
|\log \mathbb{K}({\mu_1},z) - \log \mathbb{K}({\mu_2},z) |\leq D W({\mu_1},{\mu_2}).
\] 
\item For all $\mu \in \mathcal{M}(X)$,  we have $\mathbb{K}(\cL_{\varphi}^\ast(\mu),z) = \rho \mathbb{K}(\mu,z)$ for $\nu$-a.e. $z$.  
\item For each $\mu \in \mathcal{M}(X)$, the map $\mathbb{K}(\mu,\cdot)$ is $T$-invariant, that is $\mathbb{K}(\mu,z) = \mathbb{K}(\mu,T(z))$ for  $\nu$-a.e. $z \in X$. In particular, if $T$ is ergodic with respect to $\nu$, then $\nu_\mu$ is a multiple of $\nu$, $\mathbb{K}(\mu,z)$ is constant with respect to $z$ and $\{ \nu_\mu: \mu \in \mathcal{M}(X)\}$ is one-dimensional.
\end{enumerate}
\end{theorem}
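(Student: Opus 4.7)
The plan is to carry out the Denker-Urbanski construction of Theorem \ref{theo:conformal_general} with a subsequence $(s_k)$ chosen to work uniformly for every $\mu \in \mathcal{M}(X)$, then deduce (ii)--(iii) from $(\rho/\varphi)$-conformality, and finally obtain the Lipschitz estimate (i) from the log-variant Doeblin-Fortet bound. Writing $\int f\,d\nu_{\mu,s}=\int F^f_s\,d\mu$ with
\[
F^f_s(z)\;:=\;\frac{1}{\cP(s)}\sum_{n\in\N}b_n s^{-n}(\cL_\varphi^n f)(z),
\]
it suffices, by a density argument on $f$, to find $s_k\searrow\rho$ along which $F^f_{s_k}$ converges pointwise for each bounded Lipschitz $f$ supported on finitely many fibers $X_h$. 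Theorem \ref{theo:conformal_general} gives convergence at the base point $\xi$, and applying \eqref{eq:Doeblin-Fortet-log-Hoelder} term-by-term in $n$ yields a local log-Lipschitz estimate on $F^f_s$ with a constant independent of $s>\rho$. Combined with the $\xi$-bound, this produces uniform local equiboundedness and equicontinuity on cylinders, so Arzela-Ascoli plus a diagonal argument over a countable dense family of $f$ and of cylinders produces the required $(s_k)$. Dominated convergence then gives $\int f\,d\nu_{\mu,s_k}\to\int F^f\,d\mu=:\int f\,d\nu_\mu$ for every $\mu$, and $(\rho/\varphi)$-conformality of $\nu_\mu$ follows from the same index-shift used in the proof of Lemma \ref{lemma:conf}.

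For part (ii), the index shift $n\mapsto n+1$ in \eqref{eq:explicit_2} together with $b_n/b_{n+1}\to 1$ and $\cP(s_k)\to\infty$ (as in Proposition \ref{prop:measure}(ii)) yields $\nu_{\cL_\varphi^\ast\mu}=\rho\,\nu_\mu$, equivalent to $\mathbb{K}(\cL_\varphi^\ast\mu,\cdot)=\rho\,\mathbb{K}(\mu,\cdot)$. For part (iii), since $\nu$ and $\nu_\mu$ are both $(\rho/\varphi)$-conformal, for every Borel set $A$ on which $T$ is injective
\[
\int_A\mathbb{K}(\mu,\cdot)\,\tfrac{\rho}{\varphi}\,d\nu
\;=\;\int_A\tfrac{\rho}{\varphi}\,d\nu_\mu
\;=\;\nu_\mu(T(A))
\;=\;\int_{T(A)}\mathbb{K}(\mu,\cdot)\,d\nu
\;=\;\int_A(\mathbb{K}(\mu,\cdot)\circ T)\,\tfrac{\rho}{\varphi}\,d\nu,
\]
forcing $\mathbb{K}(\mu,\cdot)=\mathbb{K}(\mu,\cdot)\circ T$ $\nu$-a.e., which under ergodicity of $T$ makes $\mathbb{K}(\mu,\cdot)$ $\nu$-a.e.\ constant. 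Equivalence of $\nu$ and $\nu_\mu$ on each $X_g$ follows from Proposition \ref{prop:measure}(iii) applied to both measures, since this identifies the ratio of their masses on any cylinder with their ratio of masses on the corresponding fiber.

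Part (i) is the most delicate. Linearity of \eqref{eq:explicit_2} gives $\mathbb{K}(\mu,z)=\int K(y,z)\,d\mu(y)$ with $K(y,z):=\mathbb{K}(\delta_y,z)$. Applying \eqref{eq:Doeblin-Fortet-log-Hoelder} to the indicator $\1_{[a,h]}$ of a length-one cylinder of $X$ (where $LD=0$) and passing to the limit in $s_k$ produces
\[
\bigl|\nu_{\delta_{y_1}}([a,h])/\nu_{\delta_{y_2}}([a,h])-1\bigr|\leq C_\varphi\,d(y_1,y_2)
\]
uniformly in $[a,h]$ for $y_1,y_2$ in the same cylinder of $X$; combined with Proposition \ref{prop:measure}(iii) this extends to cylinders of arbitrary length, and a Lebesgue differentiation argument on the cylinder filtration transfers it to the pointwise log-Lipschitz estimate $|K(y_1,z)/K(y_2,z)-1|\leq C_\varphi\,d(y_1,y_2)$ for $\nu$-a.e.\ $z$. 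For $y_1,y_2\in X_g$ in different cylinders, Proposition \ref{prop:measure}(iii) and topological transitivity of $(X,T)$ yield a uniform ratio bound $K(\cdot,z)\asymp 1$ on $X_g$ independent of $z$, so $\log K(\cdot,z)$ is globally Lipschitz on $X_g$ with a constant $L$ independent of $z$ and $g$. Kantorovich-Rubinstein duality applied to the Lipschitz function $K(\cdot,z)$, combined with $|\log a-\log b|\leq|a-b|/\min(a,b)$ and the two-sided bounds on $K(\cdot,z)$, then yields (i). The principal obstacle is precisely this uniform control of $K$ in $z$ and $g$; the key enabler is the log-variant Doeblin-Fortet inequality \eqref{eq:Doeblin-Fortet-log-Hoelder}, which converts what would otherwise be a $\mu$-by-$\mu$ weak-$\ast$ compactness problem into a uniform Arzela-Ascoli argument on $X$.
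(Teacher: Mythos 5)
Your proposal is correct and follows the paper's overall strategy — the Denker--Urbański summation along a refined subsequence, the Doeblin--Fortet inequality to control the dependence of $m_\zeta$ on $\zeta$, the b.i.p. property to get fiberwise-uniform comparability, the index shift for part (ii), and Kantorovich duality for part (i) — but two of your local choices genuinely differ from the paper's. First, you organize the subsequence extraction as an Arzelà--Ascoli/diagonal argument on the functions $F^f_s$ over a countable family of Lipschitz test functions; the paper instead gets pointwise convergence of $m^s_\zeta$ on the countable dense set $E(\xi)=\bigcup_n T^{-n}(\{(\xi,\id)\})$ and then extends $\zeta\mapsto m_\zeta$ to $X$ by the uniform local Lipschitz estimate. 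These are interchangeable, but note that the paper works only with indicators of cylinders, for which the $D(f)$ correction term in \eqref{eq:Doeblin-Fortet} vanishes identically (since $\1_{[b,h]}\circ\tau_v$ takes the same value at $z_1$ and $z_2$ in a common cylinder); for your wider class of Lipschitz $f$ you still need to check that the $r^n D(f)$ contribution is $o(\cP(s))$, which does hold here because $\sum_n b_n \cZ^n (r/s)^n$ stays bounded as $s\searrow\rho$ while $\cP(s)\to\infty$, but it is an extra step you should not elide by citing the $\myLD$-variant (which requires $f$ to be log-Lipschitz on cylinders, not merely Lipschitz). Second, your proof of part (iii) is Sullivan's change-of-variables argument comparing the two $(\rho/\varphi)$-conformal measures $\nu$ and $\nu_\mu$ on sets where $T$ is injective; the paper instead differentiates the ratio $\nu_\mu([w_1\cdots w_n,g])/\nu([w_1\cdots w_n,g])$ along the cylinder filtration and shifts by one symbol using conformality. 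Your version is cleaner and buys the same conclusion. The one soft spot is the passage from $m^{s_k}_\zeta(f)\to m_\zeta(f)$ to $\int m^{s_k}_\zeta(f)\,d\mu\to\int m_\zeta(f)\,d\mu$ for arbitrary $\mu\in\mathcal{M}(X)$: the natural dominating function $\sup_{s>\rho}m^s_\zeta(f)$ need not be $\mu$-integrable, since the fiberwise bounds on $m_\zeta$ degenerate as $\zeta$ escapes to infinity in $G$; the paper's appeal to monotone convergence at the same point is equally informal, so this is a shared caveat rather than a defect of your argument alone.
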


\begin{remark}\label{rem:eigenfunctions} Before giving the proof, we discuss a relation to Ruelle's operator theorem. Namely, by considering the restriction $X \to  \mathcal{M}_\sigma(X)$, $x \mapsto \nu_x:=\nu_{\delta_x}$, part (ii) of the above implies that $h_z: x \mapsto \mathbb{K}(\delta_x,z)$ satisfies $\cL_{\varphi}(h_z) = \rho h_z$. Hence, the above gives rise to the construction of a family of $\sigma$-finite, conformal measures 
$\{ \nu_\mu: \mu \in \mathcal{M}(X)\}$ and a family of eigenfunctions $\{h_z : z \in X\}$. If $\nu$ is ergodic, these families are one dimensional, that is, they are subsets of $\left\{t\nu: t >0\right\}$ and $\left\{(x \to t\nu_x(X_{\id}): t > 0\right\}$, respectively.     
\end{remark}

\begin{proof} We begin with the construction of $\nu_\mu$ for the case that $\mu$ is a Dirac measure $\delta_\zeta$. So assume that  $ \zeta \in E(\xi) :=  \bigcup_{n \in \N} T^{-n}(\{(\xi,\id)\})$, for $\xi \in \Sig$ and define,  for a non-negative, continuous function $f: X \to \R$ and $s > \rho$, 
\begin{align*}
 m^s_\zeta(f)  := \frac{1}{\cP(s)}
 \sum_{n\in \N} {b_{n}} s^{-n} (\cL_{\varphi}^n f)(\zeta), 
 \end{align*}
 where ${\cP(s)}$ is given by (\ref{eq:normalization}). It follows from property (C) that $m^s_\zeta$ restricted to functions on $X_{\id}$ defines a tight family of measures, and hence that, for a suitable subsequence $(s_{k_j}: j \in \N)$ of $(s_k)$ given by property (C), 
\begin{align}
 \label{eq:other_preimage_ref_point}
  m_\zeta(f)  := \lim_{j \to \infty} \frac{1}{\cP(s_{k_j})}
 \sum_{n\in \N} {b_{n}} s_{k_j}^{-n} (\cL_{\varphi}^n f)(\zeta) 
 \end{align}
exists for each  non-negative, continuous function $f: X \to \R$. In particular, $m_\zeta$ defines a measure.
Since $E(\xi)$ is countable it is moreover possible to choose the subsequence $(s_{k_j})$ such that the limit in (\ref{eq:other_preimage_ref_point}) exists for all  $\zeta \in E(\xi)$ and $f$ non-negative and continuous. 
Moreover, as $\lim_n b_n/b_{n+k}=1$ for each $k \in \N$, it  follows that 
\begin{align}
 \label{eq:other_preimage_ref_point2} m_\zeta(f) =  \rho^{-k} \sum_{v \in \cW^k: \zeta \in \theta^k([v])\times \{g\}}  \Phi_k(\tau_v(\zeta)) m_{\tau_v(\zeta)}(f)  =   \rho^{-k} \cL_{\varphi}ˆk(m_\cdot(f)(\zeta). \end{align}
Hence, for $\zeta$ with $T^n(\zeta) = (\xi,\id)$, it follows from  (\ref{eq:other_preimage_ref_point2}) that, for each Borel set $A$,
 $m_{\xi,\id}(A) = \nu(A) \geq \rho^{-n} \Phi_n(\zeta) m_\zeta(A)$. On the other hand, it follows from transitivity that there exist $v \in \cW^m$ and $m \in \N$ such that $\tau_v(\zeta)$ and $(\xi,\id)$ are in the same cylinder. Hence, by combining the above argument with bounded distortion, we obtain that
\begin{align*} \label{eq:equivalence_of_densely_defined_measures}
\rho^{m}  \Phi_m(\xi)^{-1} m_\zeta(A) \gg \nu(A) \geq \rho^{-n}  \Phi_n(\zeta) m_\zeta(A). \end{align*} In particular, the measures are equivalent and the Radon-Nikodym derivative $\mathbb{K}(\zeta,\cdot):= dm_\zeta/d\nu$ exists and is a.s. strictly positive. 

We now prove that  $m_{\zeta_1}(A) \asymp  m_{\zeta_2}(A)$ whenever the second coordinates coincide, that is $\zeta_1, \zeta_2  \in E(\xi) \cap X_g$ for some $g \in G$. In order to do so, assume that $\zeta_2 \in [a,g]$ for some $a \in \mathcal{I}_{\textrm{\tiny bip}}$. By the b.i.p.-property, there exist $b \in \mathcal{I}_{\textrm{\tiny bip}}$ and $h \in G$ such that $\zeta_1 \in T([b,h])$ and by transitivity a finite word  $w$ such that $awb$ is admissible with $\psi_{awb}=\id$. As above, it follows that $  \Phi_{|awb|}(x)  m_{\zeta_2}(A) \ll  m_{\zeta_1}(A)$ for any $x \in [awb]$. Hence, as  
$\mathcal{I}_{\textrm{\tiny bip}}$ is finite, $m_{\zeta_2}(A) \ll  m_{\zeta_1}(A)$ with respect to a constant which does not depend on $\zeta_1$ and $a \in \mathcal{I}_{\textrm{\tiny bip}}$.

In order to prove the opposite direction, for each $b \in \mathcal{I}_{\textrm{\tiny bip}}$ choose $x_b \in [b]$. Also note that, for each $v \in \cW^1$ with $\zeta_1 \in \theta([v])\times \{g\}$, there exists $b(v) \in \mathcal{I}_{\textrm{\tiny bip}}$ such that $vb(v)$ is admissible. As 
$\varphi(\tau_v(\zeta_1)) \asymp \varphi(\tau_v((x_{b(v)},g))$, we have by the above that
\begin{align*}
m_{\zeta_1}(A) & = \rho^{-1} \sum_{v : \zeta_1 \in \theta([v])\times \{g\}}  \varphi(\tau_v(\zeta_1)) \;  m_{\tau_v(\zeta_1)}(A)  \\
& \asymp \rho^{-1} \sum_{v : \zeta_1 \in \theta([v])\times \{g\}}  \varphi(\tau_v((x_{b(v)},g))) \;m_{\tau_v((x_{b(v)},g))}(A)  \\
& \leq \sum_{b \in  \mathcal{I}_{\textrm{\tiny bip}}} m_{(x_b,g)}(A) \ll | \mathcal{I}_{\textrm{\tiny bip}}| \; m_{\zeta_2}(A).
\end{align*}
Hence,  $m_{\zeta_1}(A) \asymp  m_{\zeta_2}(A)$ which implies that
 \begin{equation}\label{eq:estimate_inside_the_pot}
 \sup\left(\left\{ \frac{\mathbb{K}((x_1,g),z)}{\mathbb{K}((x_2,g),z)} \with  (x_1,g),(x_2,g) \in E(\xi), g \in G, z \in X \right\}\right) < \infty.
 \end{equation}
In order to extend $\mathbb{K}(\cdot,\cdot)$ to a globally defined function, we now show that  $\zeta \mapsto \mathbb{K}(\zeta,z)$ is $\log$ Hölder. For $k,n \in \N$, $\zeta_1,\zeta_2 \in [a,g] \cap E(\xi)$, $a \in \cW^1$, $b \in \cW^k$ with $k \leq n$ and $h \in G$, we obtain by \eqref{eq:Doeblin-Fortet} that 
\begin{align} \label{eq:hölderestimate_for_the_eigenfunction}
 |\cL_{\varphi}^n(\1_{[b,h]})(\zeta_1) -  \cL_{\varphi}^n(\1_{[b,h]})(\zeta_2)| 
  & \leq C_\varphi d(\zeta_1,\zeta_2) \cL_{\varphi}^n(\1_{[b,h]})(\zeta_1),
\end{align} 
with $C_\varphi$ only depending on the H\"older constant of $\varphi$.
Hence, 
\[|m^s_{\zeta_1}([b,h]) - m^s_{\zeta_2}([b,h])| \leq C_\varphi m^s_{\zeta_1}([b,h]) d(\zeta_1,\zeta_2)\]
and, by taking the limit, 
\[|m_{\zeta_1}([b,h]) - m_{\zeta_2}([b,h])|  \leq C_\varphi   m_{\zeta_1}([b,h]) d(\zeta_1,\zeta_2).\]
 Since cylinder sets are generating the Borel algebra and are stable under intersections it follows by taking the limit as 
$[b,h] \to z \in X$ that $|(dm_{\zeta_1}/dm_{\zeta_2})(z) - 1| \ll d(\zeta_1,\zeta_2) $ for $\nu$-a.e. $z \in X$. 
Furthermore, as $\zeta_1,\zeta_2 \in X_g$, it follows from \eqref{eq:estimate_inside_the_pot} that $dm_{\zeta_1}/dm_{\zeta_2} \asymp 1$. Hence, $|\log  (dm_{\zeta_1}/dm_{\zeta_2})(z) | \ll d(\zeta_1,\zeta_2)$, which proves that 
the function $\zeta \mapsto \log \mathbb{K}(\zeta,z)$ is Lipschitz continuous on $E(\xi)\cap [a,g]$ with respect to a Lipschitz coefficient which is independent from $z$ and $[a,g]$.
By a further application of \eqref{eq:estimate_inside_the_pot}, there is a uniform bound for  $|\log  (dm_{\zeta_1}/dm_{\zeta_2})(z) |$ which is independent from $z$ and $g$. 
As $E(\xi)$ is dense by transitivity, there exists a unique locally Lipschitz continuous extension of $\zeta \mapsto \log \mathbb{K}(\zeta,z))$ to $X$. By taking the exponential of this extension, we obtain a globally defined function which, for ease of notation, will also be denoted by $\mathbb{K}(\cdot,\cdot)$. As the function has the same regularity as the one defined on $E(\xi)$, we have shown that
 there exists $D>0$ such that, for all $g \in G, \, \zeta_1,\zeta_2 \in X_g$ and $\nu$-a.e.  $z \in X$, 
\[ \left|\log \mathbb{K}(\zeta_1,z) - \log \mathbb{K}(\zeta_2,z)\right| \leq D d(\zeta_1,\zeta_2).
 \]

In order to obtain the representation (\ref{eq:explicit_2}), note that the construction of $m_\zeta$ through (\ref{eq:other_preimage_ref_point}) extends to all $\zeta \in X$ by the estimate (\ref{eq:hölderestimate_for_the_eigenfunction}) and the fact that $E(\xi)$ is dense in $X$. 
The next step is to verify that (\ref{eq:other_preimage_ref_point}) extends to an arbitrary Borel probability measure $\mu$ on $X$. In analogy to the above, define    
\begin{align*}
 M^s_\mu(f)  &:= \frac{1}{\cP(s)}
 \sum_{n\in \N} {b_{n}} s^{-n} \int f d (\cL_{\varphi}^n)^\ast(\mu)  \\
  &= 
 \frac{1}{\cP(s)} \sum_{n\in \N} {b_{n}} s^{-n} \int  \cL_{\varphi}^n(f)d\mu  = \int m^s_\zeta(f) d\mu(\zeta),
 \end{align*}
where the last equality follows from monotone convergence. By a further application of monotone convergence, it follows that $\lim_{k}  M^{s_k}_\mu(f) = \int m_\zeta(f) d\mu$ which proves that  \eqref{eq:other_preimage_ref_point} defines a measure and that $\nu_\mu(f):= \int m_\zeta(f) d\mu$.  Moreover, as 
\begin{align}
 \label{eq:radon-nikodym for nu_mu}
 \int f d\nu_\mu = \int m_\zeta(f) d\mu = \iint f(z) \mathbb{K}(\zeta,z) d\nu(z) d\mu(\zeta),
\end{align}
it follows that $d\nu_\mu /d\nu = \int \mathbb{K}(\zeta,z) d\mu(\zeta)$, which will also be denoted by $\mathbb{K}(\mu,z)$, by a slight abuse of notation. This finishes the proof of the existence of $\nu_\mu$. Part 1 of the theorem then follows from the definition of $W$ through Kantorovich's duality. 


In order to prove part 2, note that (\ref{eq:explicit_2}) implies that, for $\zeta \in X$ and each positive and continuous function $f$, 
that 
\begin{align} \label{eq:L-invariance_of_the_integral}
& \int f(z) \mathbb{K}(\zeta,z) d\nu(z)  = \int f d\nu_\zeta = \lim_{k \to \infty} \frac{1}{\cP(s_k)}
 \sum_{n\in \N} {b_{n}} s_k^{-n} (\cL_{\varphi}^n (f))(\zeta) \\
\nonumber
=& \sum_{v \in \cW} \varphi\circ\tau_v(\zeta)  \lim_{k \to \infty} \frac{1}{\cP(s_k)}
 \sum_{n\in \N} {b_{n}} s_k^{-n} (\cL_{\varphi}^{n-1} (f))(\tau_v(\zeta))\\
\nonumber
=& \sum_{v \in \cW} \varphi\circ\tau_v(\zeta) \rho^{-1} \int f d\nu_{\tau_v(\zeta)} 
  = \sum_{v \in \cW} \varphi\circ\tau_v(\zeta) \rho^{-1} \int f \mathbb{K}({\tau_v(\zeta)},\cdot) d\nu\\
\nonumber  
  =& \rho^{-1}  \int f(z) \cL_{\varphi}(\mathbb{K}(\cdot,z))(\zeta) d\nu(z) 
\end{align} 
where the last identity follows from monotone convergence. Hence, by \eqref{eq:radon-nikodym for nu_mu},
\begin{align*} \rho \int f \mathbb{K}(\mu,\cdot) d\nu & =  \int f(z) \int \cL_{\varphi}(\mathbb{K}(\cdot,z))(\zeta) d\mu(\zeta) d\nu(z) \\
& = \int f(z) \int \mathbb{K}(\cdot,z)d\cL_{\varphi}^\ast(\mu)  d\nu(z) = \int f \mathbb{K}(\cL_{\varphi}^\ast(\mu),\cdot)d\nu.
\end{align*}
As $f$ is arbitrary, $ \rho \mathbb{K}(\mu,z)  =  \mathbb{K}(\cL_{\varphi}^\ast(\mu),z)$ almost surely, which is part 2 
 of the theorem. 

For the proof of part 3, note that $X$ is a Besicovitch space and that each $\nu_\mu$ is conformal. Therefore, we have for $\nu$-a.e. $((w_i),g)\in X$, that 
\[ \mathbb{K}(\mu,((w_i),g)) = \lim_{n \to \infty}  \frac{\nu_\mu([(w_1\ldots w_n),g])}{\nu([(w_1\ldots w_n),g])}
 = \lim_{n \to \infty}  \frac{ \rho^{-1}\int_{[(w_2\ldots w_n),g\psi_{w_1}]} \varphi d\nu_\mu}{\rho^{-1} \int_{[(w_2\ldots w_n),g\psi_{w_1}]} \varphi d\nu}.  \] 
 It hence follows from continuity of $\varphi$ that $\mathbb{K}$ is $T$-invariant in the second coordinate. The second statement  is a standard application of the ergodic theorem. \qed 
\end{proof} 

We now give a brief characterization of the measures given by above theorem 
in case of an ergodic extension (as e.g. in example \ref{ex:polya} below for $d=1,2$). For ease of exposition, we assume that the base transformation is a Gibbs-Markov map with respect to the invariant probability $\mu$ on $\Sig$. In this situation, the product measure $\mu_G$ of $\mu$ and the counting measure on $G$ clearly is $1/\varphi$-conformal and $T$-invariant, i.e.  $\mu_G = \mu_G \circ T^{-1}$. However, note that $\mu_G$ in many cases is totally dissipative, e.g. if $G$ is non-amenable (\cite{Zimmer:1978a,Jaerisch:2013a}). 

If $T$ is conservative with respect to $\mu_G$, then $T$ also is ergodic and $\sum_n  \cZ_w^n(\xi) = \infty$ (see \cite{AaronsonDenkerUrbanski:1993,Aaronson:1997} and the proof of proposition \ref{for:ergodic_iff_divergence_type} below). In particular, $\rho=1$ and $G$ is amenable by a result in \cite{Stadlbauer:2013}. Since $\mu_G$ and $\nu_\zeta$ are both  $1/\varphi$-conformal, as observed by Sullivan, $d\nu / d\mu_G$ exists, is $T$-invariant and hence constant. This then implies that 
 the measures $\nu_\xi$ are all multiples of the product measure $\mu_G$. 
If $T$ is conservative with respect to $\nu$ and $\rho \leq 1$, then the same arguments show that the measures $\nu_\xi$ are again all multiples of $\nu$. In this situation, a  result by Jaerisch (\cite{Jaerisch:2013a}) shows that the invariant measure $h(x,z)d\nu(x)$ is unique and is the product of another measure on $\Sig$ and counting measure on $G$.

As a corollary of the existence of $\rho^{-1}\cL_\varphi$-invariant functions as shown in Remark \ref{rem:eigenfunctions}, one obtains the following  criterion of classical flavor for ergodicity.

\begin{proposition} \label{for:ergodic_iff_divergence_type} The map $T$ is either conservative or totally dissipative with respect to $\nu$. If $T$ is conservative, then $T$  is ergodic. Furthermore, $T$ is conservative and ergodic if and only if 
\[\sum_n \rho^{-n} \cZ^n(\xi) = \infty.\] 
\end{proposition}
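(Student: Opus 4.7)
The proof splits into three assertions: the dichotomy between conservative and totally dissipative, the divergence criterion, and the fact that conservativity implies ergodicity. The plan is to apply Hopf's theorems for nonsingular transformations to $T$ on $(X,\nu)$, and to sharpen the resulting $\nu$-a.e.\ conclusions using the Doeblin-Fortet estimate \eqref{eq:Doeblin-Fortet} combined with the b.i.p.\ property of the base.

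First note that $\nu$ is $\sigma$-finite with $\nu(X_g)<\infty$ for each $g\in G$ by Theorem \ref{theo:conformal_general}, and that conformality makes $T$ nonsingular with respect to $\nu$ and yields the duality $\int\cL_\varphi(f)\, g\, d\nu = \rho\int f\,(g\circ T)\, d\nu$. Consequently, $\hat T:=\rho^{-1}\cL_\varphi$ is the transfer operator of $T$ on $L^1(\nu)$. Hopf's decomposition gives the $T$-invariant splitting $X=C\sqcup D$ into conservative and totally dissipative parts, and Hopf's criterion identifies $C$ up to $\nu$-null sets with $\{\zeta:\sum_n\hat T^nf(\zeta)=\infty\}$ for any strictly positive $f\in L^1(\nu)$.

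Applied to $f:=\1_{X_\id}\in L^1(\nu)$, the criterion reads $\hat T^nf(\xi,\id)=\rho^{-n}\cZ^n(\xi)$ directly from the definition of $\cZ^n$. To pass from the $\nu$-a.e.\ statement of Hopf to the specific point $(\xi,\id)$, I would invoke the Doeblin-Fortet estimate \eqref{eq:Doeblin-Fortet}: since $D(\1_{X_\id})\equiv 0$, it yields $\cL_\varphi^n\1_{X_\id}(\zeta)\asymp\cL_\varphi^n\1_{X_\id}(\zeta')$ uniformly in $n$ whenever $\zeta,\zeta'$ lie in a common cylinder of $X_\id$. Divergence at one point of $X_\id$ thus propagates to every point of $X_\id$, and by topological transitivity together with Proposition \ref{prop:measure}(iii), to all of $X$. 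This simultaneously yields the dichotomy (divergence is an all-or-nothing condition, so $\nu(C)$ and $\nu(D)$ cannot both be positive) and the divergence criterion of the proposition.

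It remains to show that conservativity implies ergodicity. Let $A$ be $T$-invariant with $\nu(A),\nu(A^c)>0$; by the propagation argument above we may assume $A\cap X_\id$ and $A^c\cap X_\id$ both have positive $\nu$-measure. Since $T^{-1}(A)=A$ implies $\1_A(\tau_v(\zeta))=\1_A(\zeta)$ for every inverse branch, the transfer operator satisfies $\hat T^k(\1_A g)=\1_A\,\hat T^k g$, so the Hopf ratio
\[ \Phi(\zeta):=\lim_n\frac{\sum_{k=0}^{n-1}\hat T^k\1_{A\cap X_\id}(\zeta)}{\sum_{k=0}^{n-1}\hat T^k\1_{X_\id}(\zeta)} \]
equals $\1_A(\zeta)$ for $\nu$-a.e.\ $\zeta$. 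The Doeblin-Fortet estimate applied to numerator and denominator separately shows that the ratio is uniformly comparable on each cylinder of $X_\id$, with comparison constant tending to $1$ as the cylinder length grows. Since $\Phi\in\{0,1\}$, this forces $\Phi$ to be essentially constant on each cylinder of $X_\id$, and by $T$-invariance and transitivity on all of $X$, contradicting $\nu(A),\nu(A^c)>0$. The main obstacle is precisely this step of promoting the $\nu$-a.e.\ Hopf limit to a constant function; it is the combination of the Doeblin-Fortet estimate \eqref{eq:Doeblin-Fortet} with the b.i.p.\ property that provides the uniform oscillation control on iterates of $\cL_\varphi$ required here.
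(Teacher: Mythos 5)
Your treatment of the dichotomy and of the divergence criterion is essentially sound and close in spirit to the paper, which, however, does not reprove these facts: it observes that $(T,\nu)$ is a Markov fibred system with bounded distortion and quotes \cite{AaronsonDenkerUrbanski:1993} both for ``conservative or totally dissipative'' and for ``conservative implies ergodic''; it proves ``divergence implies conservative'' by showing that the set of points of $X_{\id}$ which never return to $X_{\id}$ is null and applying Poincar\'e recurrence to the first-return map with respect to the equivalent invariant measure $h\,d\nu$; and it obtains the converse from \cite[Prop.~1.3.2]{Aaronson:1997}. One caveat in your version: Hopf's identification of the conservative part with $\{\sum_n\hat T^nf=\infty\}$ requires $f>0$ a.e., and $\1_{X_{\id}}$ is not strictly positive; this is fixable (e.g.\ via $\int_W\sum_n\hat T^n\1_{X_{\id}}\,d\nu\le\int\1_{X_{\id}}\,d\nu<\infty$ for every wandering set $W$, together with transitivity and the distortion propagation you describe), but it should be said.

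The genuine gap is in your proof that conservativity implies ergodicity. If $A$ is $T$-invariant then, as you yourself note, $\hat T^k\1_{A\cap X_{\id}}=\1_A\,\hat T^k\1_{X_{\id}}$; consequently the oscillation of the numerator of your Hopf ratio over a cylinder is \emph{exactly} the oscillation of $\1_A$ over that cylinder, which is the quantity you are trying to bound. The Doeblin--Fortet estimate \eqref{eq:Doeblin-Fortet} gives no control here: for a general Borel set $A$ the coefficient $D(\1_{A\cap X_{\id}})$ is infinite (any $1$-cylinder meeting both $A$ and $A^c$ contains pairs $y,\tilde y$ arbitrarily close with $|\1_A(y)-\1_A(\tilde y)|=1$), so the right-hand side of \eqref{eq:Doeblin-Fortet} is vacuous, and the claim that ``the ratio is uniformly comparable on each cylinder with comparison constant tending to $1$'' is unjustified --- the ratio is identically $\1_A$, so that claim is equivalent to the statement being proved. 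The argument is circular. The standard repair, and what lies behind the citation of \cite{AaronsonDenkerUrbanski:1993}, is a Lebesgue density argument: choose a cylinder $[w,g]$ on which $A$ has $\nu$-density greater than $1-\epsilon$, use conservativity to return a.e.\ point of $A\cap[w,g]$ to $[w,g]$ infinitely often, and use bounded distortion of $\Phi_n$ along the return branches (regularity of the potential, not of $\1_A$) together with the Markov structure and transitivity to propagate the density estimate and force $\nu(A^c)=0$.
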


\begin{proof}
Observe that $T$ is a transitive topological Markov chain and that 
it follows from 
\[ (d\nu/d\nu \circ T)(x,g) =  \rho\phi(x)\]
that $d\nu/d\nu \circ T$ is a potential of bounded variation. Hence, $(T,\nu)$ is a Markov fibered system with the bounded distortion property as in \cite{AaronsonDenkerUrbanski:1993}. In particular, $(T,\nu)$ either is totally dissipative or conservative and if $(T,\nu)$ is conservative, then it is ergodic. Note that $\rho^{-1}\cL_\varphi$ acts as the transfer operator on $L^1(\nu)$. It hence follows from the definition of the transfer operator that, for all $W$ measurable  and $n \in \N$,
\[  \int \1_{W} \rho^{-n} \cL_\varphi^n(x,g) d\nu(x,g) = 
\int \1_{W} \circ T^n \1_{X_{\id}} d\nu = \nu\left(T^{-n}(W) \cap X_{\id} \right). 
 \]
Now assume that $\sum_n \rho^{-n} \cZ^n(\xi) = \infty$. It follows from bounded variation and transitivity that the sum diverges for all $\xi \in \Sig$. For $W :=\{z \in X_{\id} \with T^n(z) \notin X_{\id}  \forall n \geq 1\}$, we hence have that $\nu(W) = 0$. Hence, the first return map
\[ T_{X_{\id}}: X_{\id} \to X_{\id}, \; (x,\id) \mapsto T^{n_x}(x,g), \hbox{  } n_x := \min\{n \geq 1 : T^n(x,\id)\in X_{\id} \} \]
is well defined. By substituting $\nu$ with an equivalent, invariant measure given by the above theorem, an application of Poincaré's recurrence theorem gives that $T_{X_{\id}}$ is conservative. It is then easy to see that $T$ also is conservative, and hence ergodic.
The remaining assertion is a consequence of the standard result in ergodic theory, that $T$ is ergodic and conservative if and only if $\sum_n \rho^{-n}\cL^n_\varphi(f)$ diverges for all $f \geq 0$, $\int f d\nu >0$ (see \cite[Prop. 1.3.2]{Aaronson:1997}).   \qed 
\end{proof}

\section{Harmonic functions} 

By applying theorem \ref{theo:family_of_measures-2} to Dirac measures, it is possible to construct a map $\Theta:\cC \to \cH$  from a subspace of the continuous functions to a subspace of $\rho$-harmonic functions. In here, we refer to $f:X \to \R$ as \emph{$\rho$-harmonic} if $\cL_\varphi(f)=\rho f$. In order to define $\cC$, fix a reference point $\xi_0 \in X_{\id}$ and set $\nu_\mathbf{o}:= \nu_{\xi_0}$.
The space $\cC$ is now defined by
\begin{align*}
\cC & :=  \left\{f:X \to \R \with \nu_\mathbf{o}(|f|)< \infty, \lim_{n\to \infty} C_n(f)=0  \right\}, \hbox{ where }\\
C_n(f) &:= \inf\left(\left\{C : |f(z_1)-f(z_2)| \leq C|f(z_1)| \forall z_1,z_2 \in [w,g], w \in \cW^n, g \in G \right\}\right).
\end{align*}
The  space might be alternatively characterised as the space of $\log$-uniformly continuous functions with an integrability condition. Namely, 
if $C_n(f)< \infty$, then for $[w,g], w \in \cW^n$ and $g \in G$ either $f|_{[w,g]} = 0$  or $f(z) \neq 0$ for $z \in [w,g]$. In particular, with $0/0 =1$, it follows that 
\[ \left|f(z_1)/f(z_2) -1 \right| < C_n(f), \quad \forall z_1,z_2 \in [w,g]. \]     
Hence, if $C_n(f)< 1$, then $f(z_1)/f(z_2)>0$, that is the sign of $f$ is constant on $[w,g]$. These arguments show that $f \in \cC$ if and only if $\log f_+$ and $\log f_-$ are uniformly continuous, with $f_{\pm}$ referring to the strictly positive and negative parts of $f$ and $\{f \neq  0\}$ is a union of cylinders of length $n$, for some $n$ depending on $f$.   

In order to define $\cH$, recall that $d_r$ refers to the shift metric on $X$, with $r \in (0,1)$ adapted to the Hölder continuity of $\log \varphi$. In order to be able to not only consider positive $\rho$-harmonic functions, the following coefficients for the local regularity of a function $f: X \to \R$ are useful.
\begin{align*}
\myD_x(f) & :=  \sup \left\{  |f(x)-f(z)|/d_r(x,z) \with  d_r(x,z) <1 \right\} \\
\myLD(f) & := \sup \left\{  \left|{\textstyle \frac{f(z_1)}{f(z_2)} -1 }\right|/d_r(z_1,z_2) \with  d_r(z_1,z_2) <1 \right\}
\end{align*}
The space $\cH$ is now defined through a control of the local Lipschitz constant $\myD_x(f)$ as follows.
\begin{align*}
\cH^+ & :=  \left\{ (f:X \to [0,\infty)) : \cL_\varphi(f) = \rho f,\;  \myLD(f)< \infty \right\},\\
\cH  &  :=  \left\{(f:X \to \R) : \cL_\varphi(f) = \rho f,\;  \exists h \in \cH^+ \hbox{ s.t. } \myD_x(f) \leq h(x) \forall x \in X \right\}.
\end{align*}
The map $\Theta$ is then defined by, for $f \in \cC$,
\[ \Theta(f)(z) := \nu_z(f) = \int \mathbb{K}(\delta_z,y)f(y) d\nu_\mathbf{o}(y). \]
Based on a slightly more involved version of the argument used in the proof of $\log$-Hölder continuity of $\mathbb{K}$ in theorem \ref{theo:family_of_measures-2} we are now in position to prove that $\Theta$ is well defined and that $\myLD$ is always bounded by 
\[ 
C_\varphi:= \sup \left\{ \left| {\textstyle \frac{\Phi_{n}\circ\tau_v(z_1)}{\Phi_{n}\circ \tau_v(z_2)}-1}\right|/d_r(z_1,z_2) : d_r(z_1,z_2) <1 \right\} < \infty.
\] 

\begin{theorem}\label{prop:construction of eigenfunctions} The map $\Theta: \cC \to \cH$ is well defined. If $f \in \cH$ and $f \geq 0$, then $\myLD(f) \leq C_\varphi$ and, in particular, $f \in \cH^+$.
\end{theorem}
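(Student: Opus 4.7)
My approach combines two tools: the Lipschitz control on $\mathbb{K}$ in its first variable provided by theorem \ref{theo:family_of_measures-2}(i), and the Doeblin--Fortet estimate \eqref{eq:Doeblin-Fortet} applied iteratively to $\cL_\varphi^n$. I would prove the two assertions essentially independently of each other.

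For the second assertion, fix $f \in \cH$ with $f \geq 0$ and choose $h \in \cH^+$ with $\myD_x(f) \leq h(x)$ for all $x$. Since $\myLD(h) < \infty$, the function $h$ is comparable to any of its values over a level-$1$ cylinder, so there is a constant $c$ (depending only on $\myLD(h)$) such that the oscillation function $D(f)(x)$ appearing in \eqref{eq:Doeblin-Fortet} satisfies $D(f)(x) \leq c\, h(x)$. Fix $z_1, z_2$ with $d_r(z_1, z_2) < 1$. Applying \eqref{eq:Doeblin-Fortet} to $\cL_\varphi^n$ and using the $\rho$-harmonicity of both $f$ and $h$ yields
\[
|\rho^n f(z_1) - \rho^n f(z_2)| \leq C_\varphi\, d_r(z_1, z_2)\bigl[\rho^n f(z_1) + c\, r^n \rho^n h(z_1)\bigr].
\]
Dividing by $\rho^n$ and letting $n \to \infty$ gives $|f(z_1) - f(z_2)| \leq C_\varphi\, d_r(z_1, z_2)\, f(z_1)$; swapping the roles of $z_1, z_2$ produces the same bound with $f(z_2)$. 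Hence $|f(z_1) - f(z_2)| \leq C_\varphi\, d_r(z_1, z_2)\, \min\{f(z_1), f(z_2)\}$, which upon dividing by $f(z_2)$ yields $\myLD(f) \leq C_\varphi$ wherever $f(z_2) > 0$; zeros of $f$ propagate to an entire cylinder via the estimate itself together with $\cL_\varphi(f) = \rho f$ and transitivity, so the bound holds throughout and $f \in \cH^+$.

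For the first assertion, let $f \in \cC$ and write $\Theta(f)(z) = \int h_y(z)\, f(y)\, d\nu_\mathbf{o}(y)$ with $h_y(z) := \mathbb{K}(\delta_z, y)$, which by remark \ref{rem:eigenfunctions} is $\rho$-harmonic in $z$ for each $y$. Decomposing $f = f_+ - f_-$ and commuting $\cL_\varphi$ with the integral on each piece via monotone convergence yields $\cL_\varphi(\Theta(f)) = \rho\, \Theta(f)$. For the local regularity, apply theorem \ref{theo:family_of_measures-2}(i) to the pair $(\delta_{z_1}, \delta_{z_2})$ --- which, when $d_r(z_1, z_2) < 1$, are supported on a common $X_g$ and satisfy $W(\delta_{z_1}, \delta_{z_2}) = d_r(z_1, z_2)$ --- to obtain $|\mathbb{K}(\delta_{z_1}, y)/\mathbb{K}(\delta_{z_2}, y) - 1| \leq C'\, d_r(z_1, z_2)$ for $\nu_\mathbf{o}$-a.e.\ $y$, with $C'$ uniform. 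Multiplying this by $\mathbb{K}(\delta_{z_2}, y)\, |f(y)|$ and integrating produces the additive bound $|\Theta(f)(z_1) - \Theta(f)(z_2)| \leq C'\, d_r(z_1, z_2)\, \Theta(|f|)(z_2)$ and, upon substituting $|f|$ for $f$, the multiplicative analogue $|\Theta(|f|)(z_1)/\Theta(|f|)(z_2) - 1| \leq C'\, d_r(z_1, z_2)$. Thus $\Theta(|f|) \geq 0$ is $\rho$-harmonic with $\myLD(\Theta(|f|)) \leq C'$, so $\Theta(|f|) \in \cH^+$, and the dominating function $C'\, \Theta(|f|)$ places $\Theta(f)$ in $\cH$.

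The main obstacle is the Doeblin--Fortet iteration for (II): one must translate the hypothesis $\myD_x(f) \leq h(x)$ into a pointwise bound on the sup-over-pairs oscillation $D(f)(x)$ used in \eqref{eq:Doeblin-Fortet} (this is where local comparability of $h$, supplied by $\myLD(h) < \infty$, is essential), and one must ensure that the $c\, r^n h(z_1)$ error is genuinely killed in the limit, which holds because $h$ is finite valued and $r<1$. A minor subtlety for (I) is that theorem \ref{theo:family_of_measures-2}(i) only provides the Lipschitz bound $\nu$-a.e.\ in $y$; but $\nu_\mathbf{o}$ is equivalent to $\nu$ via the a.e.\ positive density $\mathbb{K}(\delta_{\xi_0}, \cdot)$, so the bound also holds $\nu_\mathbf{o}$-a.e., as required by the integrations.
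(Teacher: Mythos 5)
Your proof is correct, and the two halves relate to the paper differently. For the second assertion ($f \in \cH$, $f \geq 0$ implies $\myLD(f) \leq C_\varphi$) you follow essentially the paper's own argument: iterate the Doeblin--Fortet estimate \eqref{eq:Doeblin-Fortet}, use harmonicity of $f$ and of the dominating function to rewrite $\rho^{-n}\cL^n_\varphi$, and let $n \to \infty$ so that the $r^n$-term disappears; your extra step converting the anchored coefficient $\myD_x(f) \leq h(x)$ into a bound $D(f) \leq c\,h$ via $\myLD(h) < \infty$ is a harmless elaboration of what the paper does implicitly by feeding $\myD_\cdot(h)$ directly into the operator. For the first assertion your route is genuinely different. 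The paper re-runs the Doeblin--Fortet computation on $\cL^n_\varphi(f)$ for $f \in \cC$, splitting the oscillation into a $C_\varphi d_r(z_1,z_2)\cL^n_\varphi(|f|)(z_1)$ term and an $A_n\cL^n_\varphi(|f|)(z_2)$ term with $A_n \leq C_n(f) \to 0$; this uses the defining modulus of continuity of $\cC$ and yields the sharp constant $C_\varphi$ in $\myD_{z_1}(\Theta(f)) \leq C_\varphi\,\Theta(|f|)(z_1)$ and $\myLD(\Theta(|f|)) \leq C_\varphi$. You instead quote theorem \ref{theo:family_of_measures-2}(i) for the pair $(\delta_{z_1},\delta_{z_2})$, using $W(\delta_{z_1},\delta_{z_2}) = d(z_1,z_2)$, and integrate the resulting pointwise bound on $|\mathbb{K}(\delta_{z_1},\cdot)/\mathbb{K}(\delta_{z_2},\cdot)-1|$ against $|f|\,d\nu_\mathbf{o}$. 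This is legitimate (that part of theorem \ref{theo:family_of_measures-2} is proved before the present statement) and buys brevity: it shows that only the integrability $\nu_\mathbf{o}(|f|) < \infty$ is needed for the domination $\myD_x(\Theta(f)) \leq C'\,\Theta(|f|)(x)$, not the condition $C_n(f) \to 0$. What it costs is the constant: you get $C' = De^{D}$ rather than $C_\varphi$, which suffices for membership in $\cH$ (and $\myLD(\Theta(|f|)) \leq C_\varphi$ can then be recovered a posteriori from your second assertion applied to $\Theta(|f|)$), but it does not reproduce the paper's direct quantitative conclusion. Your handling of the remaining technical points --- harmonicity of $\Theta(f)$ via Tonelli and $\cL_\varphi(\mathbb{K}(\delta_\cdot,y)) = \rho\,\mathbb{K}(\delta_\cdot,y)$, the $\nu$- versus $\nu_\mathbf{o}$-a.e.\ issue, and the propagation of zeros under the convention $0/0=1$ --- is sound.
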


\begin{proof} Suppose that $f \in \cC$. By applying the arguments in (\ref{eq:L-invariance_of_the_integral}) to $f$ shows that $\cL_\varphi(\Theta(f)) = \rho \Theta(f)$. Hence, it remains to obtain a bound on $D_x(f)$. For ease of notation, set $f_v := f \circ \tau_v$, for $v \in \cW^n$ and $n \in \N$. 
Suppose that $z_1,z_2 \in [w,g]$ with $w \in \cW^k$, $g \in G$ and that $n$ is sufficiently large such that for all $v \in \cW^n$, either $f_v(z_1) = f_v(z_2)=0$ or $f_v(z_1), f_v(z_2) \neq 0$. Setting $0/0 :=1$ and $A_n:= \sup_{v \in \cW^n} \left|{\textstyle  \frac{f_v(z_1)}{f_v(z_2)}-1} \right|$, we obtain by a similar argument as in \eqref{eq:Doeblin-Fortet}  that
\begin{align*}
& \left| \cL^n_\varphi(f)(z_1) - \cL^n_\varphi(f)(z_2) \right| \\
 \leq &  
  \sum_{v\in \cW^n}  \left|\left(  \Phi_{n,v}(z_1) - \Phi_{n,v}(z_2) \right) f_v(z_1)  \right| +  
  \sum_{v\in \cW^n}  \left| \Phi_{n,v}(z_2)  \left( f_v(z_1)  -  f_v(z_2) \right)  \right|\\ 
\leq & \sum_{v\in \cW^n}   \left| \left( {\textstyle 1- \frac{\Phi_{n,v}(z_2)}{\Phi_{n,v}(z_1)}} \right)  \Phi_{n,v}(z_1) f_v(z_1)  \right| +
 \sum_{v\in \cW^n}  \left|  \Phi_{n,v}(z_2) f_v(z_2) \left({\textstyle  \frac{f_v(z_1)}{f_v(z_2)}-1} \right)  \right|\\
\leq & C_\varphi d_r(z_1,z_2) \cdot   \cL^n_\varphi(|f|)(z_1)   +   A_n \cdot  \cL^n_\varphi(|f|)(z_2).
\end{align*}
Since $ \lim_{n \to \infty} A_n = 0$, we have
\begin{align*}
|\Theta(f)(z_1) - \Theta(f)(z_2)|   & = |\nu_{z_1}(f) - \nu_{z_2}(f)| \\
& \leq  C_\varphi d_r(z_1,z_2) \nu_{z_1}(|f|) = C_\varphi d_r(z_1,z_2) \Theta(|f|)(z_1).
\end{align*}
Hence, $\myD_{z_1}(f) \leq C_\varphi \Theta(|f|)(z_1)$. By dividing with $\Theta(|f|)$ and substituting $f$ with $|f|$, the same argument shows that $\myLD(\Theta(|f|))\leq C_\varphi$. In particular, $\Theta(|f|) \in \cH^+$.  
Now assume that $f \in \cH$ and $f \geq 0$. Then there exists $\hat{h} \geq 0$ with $D_z(h)\leq \hat{h}$ and $\cL_\varphi(\hat{h}) = \rho \hat{h}$. By similar arguments,
\begin{align*}
|h(z_1) - h(z_2)| &=  \rho^{-n}\left| \cL^n_\varphi(h)(z_1) - \cL^n_\varphi(h)(z_2) \right|\\
& \leq   \rho^{-n} \left(  C_\varphi d_r(z_1,z_2) \cL^n_\varphi(h)(z_1) +   r^n d_r(z_1,z_2) \cL^n_\varphi(\myD_\cdot(h))(z_2)  \right) \\
&\leq  C_\varphi d_r(z_1,z_2) \left( h(z_1) +  r^n \hat{h}(z_2)\right).
\end{align*}
Since $n$ is arbitrary and $r \in (0,1)$, $\myLD(h)<C_\varphi$. \qed
\end{proof}

The classical Martin boundary of a random walk on a group is a quotient of the space of paths, where two paths $(g_k)$, $(h_k)$ in $G$ are identified if $\lim_k K(\cdot,g_k) = \lim_k K(\cdot,h_k)$, where $K$ refers to the Martin kernel (see, e.g., \cite{Woess:2009}).
In the context of group extensions, the natural candidate for a path in $G$ is given by $(\psi_k(x))$, for some $x \in \Sig$, whereas the function $(z,g) \mapsto  \nu_z(X_g)/\nu_\mathbf{o}(X_g)$ might serve as the analogue of the Martin kernel.  

Here, the situation is different. Assume that $(x,g)=((w_k),g) \in X$. Using the conformality of $\nu$  in proposition \ref{prop:measure}, we have by theorem \ref{theo:family_of_measures-2} that, for $f_n := \1_{[w_1 \ldots w_n,g]}/\nu_\mathbf{o}([w_1 \ldots w_n,g])$, 
\begin{align} \nonumber \label{eq:convergence_to_the_kernel}
 \frac{\nu_z(X_{g\psi_n(x)})}{\nu_\mathbf{o}(X_{g\psi_n(x)})} \asymp \frac{\nu_z([w_1 \ldots w_n,g])}{\nu_\mathbf{o}([w_1 \ldots w_n,g])}
 = \Theta\left(
 f_n
 \right)(z) 
 \xrightarrow{n \to \infty} \mathbb{K}(z,(x,g)). 
\end{align}

\subsection{Natural extensions and immediate implications}
In order to obtain  information on the asymptotic behavior of elements of $\cH$, we now employ ideas from the theory of Markov processes, which are similar but somehow dual to the ones for Markov maps. Namely, in order to obtain a stochastic process associated with $(X,T)$, we consider the process with transition probability $(dm\circ\tau_v/dm)(x)$ for transitions from $x$ to $\tau_v(x)$, where $m$ is an $T$-invariant measure. Hence, the appropriate object are the left-infinite sequences with respect to an invariant measure $\widehat{m}$ constructed from $m$. That is, the stochastic process is the left half of the natural extension of $(X,T,m)$ whose construction in case of an underlying shift space we recall now.
Set
\begin{align*}
Y & := \left\{ ((w_i,g_i) : i \in \Z) : w_i \in \cW^1, g_i \in G, a_{w_i w_{i+1}}=1, g_{i+1} = g_i\psi(w_i)  \right\}, \\
S & :Y \to Y, \; ((w_i,g_i)) \to ((w_i',g_i')), \hbox{ with }  w_i' = w_{i+1},\;  g_i'= g_{i+1} \; \forall i \in \Z.
\end{align*}
In other words, $S$ is the left shift on the two sided shift space $Y$. The cylinder sets of $Y$ are given by, for $(w_0w_1\cdots w_n)\in \cW^{n+1}$, $h_i \in G$ and $k \in \Z$,
\begin{align*} &[((w_0,g_0)\cdots(w_n,g_n))]_k \\ & \quad :=  \left\{ ((v_i,h_i)) \in Y :  (v_{k+j},h_{k+j}) = (w_{j},g_{j}), \hbox{ for } j=0,1,\ldots,n    \right\}. \end{align*}
If $m$ is $T$-invariant, then $\widehat{m}([((w_0,g_0)\cdots(w_n,g_n))]_k):= m([(w_0 w_1\cdots w_n),g_0])$ defines a measure   
 $\widehat{m}$ on $Y$. As it easily can be seen, we then have, for  
\[\pi : Y \to X, ((w_i,g_i)) \to ((w_i: i \geq 0),g_0),\]
that $\pi \circ S = T \circ \pi$, $\widehat{m} = m \circ \pi^{-1}$, $S$ is invertible, $\widehat{m}$ is $S$-invariant and $(Y,S,\widehat{m})$ is  minimal in the sense that the $\sigma$-algebra $\cF$ generated by the cylinder sets of $Y$ is generated by $\left\{ S^n(\pi^{-1}(\mathcal{B})): n \in \Z \right\}$, with $\mathcal{B}$ referring to the $\sigma$-algebra generated by the cylinder sets of $X$. In particular, $(Y,S,\cF,\widehat{m})$ is the natural extension of $(X,T,\cB,m)$ (see, e.g., \cite{CornfeldFominSinaui:1982}). 

Observe that there are   several canonical choices for the invariant measure $m$. Either $\mu$ is  $\theta$-invariant and $m$ is the product $\mu_G$ of $\mu$ and the counting measure on $G$, or $dm = h d \nu_\mathbf{o}$, for some $h\in \cH^+$. However, in both cases, it is possible to identify martingales with respect to the filtration $(\cF_n: n \in \N)$, where $\cF_n := S^{n}\circ \pi^{-1}(\cB)$. We begin with the analysis of $(Y,S)$ with respect to $\widehat{\mu}_G$.

\begin{proposition} \label{prop:martingale_for_mu} 
Suppose that $\mu$ is  $\theta$-invariant and that $h \in \cH^+$. Then, for $\widehat\mu_G$-a.e. $z \in Y$, 
\[h_\infty(z) :=  \lim_{n\to \infty} \rho^{-n} h\circ\pi\circ S^{-n}(z)\]
exists. If $\rho<1$, then $h_\infty=0$, and if $\rho=1$, then  $h_\infty =  h_\infty \circ S$ and $h_\infty < \infty$ a.s.
\end{proposition}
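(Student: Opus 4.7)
My plan is to interpret $M_n := \rho^{-n}\, h \circ \pi \circ S^{-n}$ as a non-negative martingale on the natural extension and extract all three assertions from martingale convergence together with the functional equation $M_n \circ S = \rho M_{n-1}$.

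First, I would verify that $(M_n)$ is a non-negative $(\cF_n)$-martingale under $\widehat{\mu}_G$, where $\cF_n := S^n\,\pi^{-1}(\cB)$ is an increasing filtration (since $S^{-1}\pi^{-1}\cB = \pi^{-1}T^{-1}\cB \subset \pi^{-1}\cB$). The decisive computation is
\[
E_{\widehat{\mu}_G}\big[\,f \circ \pi \circ S^{-1}\,\big|\,\pi^{-1}\cB\,\big] \;=\; \cL_\varphi(f) \circ \pi \quad (f \text{ bounded measurable}),
\]
which identifies $\cL_\varphi$ as the transfer operator of $T$ with respect to the $T$-invariant, $1/\varphi$-conformal measure $\mu_G$. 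Applied to $f = h$, using $\cL_\varphi(h) = \rho h$, and shifted by $n$ via the $S$-equivariance of conditional expectations, it yields $E[M_{n+1}\,|\,\cF_n] = M_n$.

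To deduce a.s.\ convergence I would handle the $\sigma$-finiteness of $\widehat{\mu}_G$ by localization: each $D_{w,g} := \pi^{-1}([w,g])$ is $\cF_0$-measurable of finite measure, and since $\myLD(h) < \infty$ the function $h$ is bounded on the cylinder $[w,g]$; hence $(M_n \cdot \1_{D_{w,g}}, \cF_n)$ is a non-negative martingale of constant finite $L^1$-norm on a finite-measure space, and Doob's theorem produces a.s.\ convergence to an a.s.-finite limit on each such piece. These cylinders exhaust $Y$, so $M_n \to h_\infty$ $\widehat{\mu}_G$-a.s.\ with $h_\infty < \infty$ a.s. The identity $M_n \circ S = \rho M_{n-1}$ is immediate from the definition of $M_n$, so passing to the a.s.\ limit gives $h_\infty \circ S = \rho h_\infty$ a.s.

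This settles the case $\rho = 1$ at once. For $\rho < 1$, iterating yields $h_\infty \circ S^n = \rho^n h_\infty \to 0$ a.s., and $h_\infty = 0$ a.s.\ would then follow by combining the $S$-invariance of $\widehat{\mu}_G$ with the local integrability of $h_\infty$ (from Fatou applied on each finite-measure cylinder $D$). Explicitly, $\int_{S^n D} h_\infty\, d\widehat{\mu}_G = \rho^n \int_D h_\infty\, d\widehat{\mu}_G \to 0$, and Markov's inequality on the finite-measure sets $S^n D$ gives $\widehat{\mu}_G(S^n D \cap \{h_\infty > \delta\}) \to 0$ for every $\delta > 0$, which in conjunction with the invariance identity $\widehat{\mu}_G(D \cap \{h_\infty > \delta\}) = \widehat{\mu}_G(S^{-n}D \cap \{h_\infty > \delta\rho^{-n}\})$ and a careful continuity-from-above argument on the shrinking super-level sets forces $\widehat{\mu}_G(D \cap \{h_\infty > \delta\}) = 0$.

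The main technical obstacle I anticipate is precisely this last step: $\sigma$-finiteness obliges one to argue consistently on finite-measure pieces (since $\widehat{\mu}_G(\{h_\infty > \delta\})$ itself may be infinite) and to carefully track how the moving sets $S^{\pm n} D$ interact with the level sets of $h_\infty$. A cleaner alternative, when $T$ is totally dissipative with respect to $\mu_G$ -- which Remark~\ref{rem:recurrent} indicates is the case whenever $G$ is non-amenable, and in particular whenever $\rho < 1$ -- is to appeal to dissipativity and the wandering structure of backward orbits to conclude $h_\infty = 0$ more directly.
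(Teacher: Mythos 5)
Your overall route is exactly the paper's: the same martingale $W_n=\rho^{-n}h\circ\pi\circ S^{-n}$ with respect to the filtration $\cF_n=S^{n}\pi^{-1}(\cB)$, the same identification of $\cL_\varphi$ as the transfer operator of the invariant measure $\mu_G$ to obtain $\mathbb{E}(W_{n+1}\mid\cF_n)=W_n$, Doob's theorem for existence of the limit, and Fatou for $h_\infty<\infty$; your localization to the finite-measure, $\cF_0$-measurable pieces $\pi^{-1}([w,g])$ is a welcome extra precaution that the paper leaves implicit. There is, however, a sign error in your functional equation that invalidates your treatment of the case $\rho<1$. From the definition, $M_n\circ S=\rho^{-n}\,h\circ\pi\circ S^{-(n-1)}=\rho^{-1}M_{n-1}$, so in the limit $h_\infty=\rho\,h_\infty\circ S$, equivalently $h_\infty\circ S=\rho^{-1}h_\infty$ --- not $h_\infty\circ S=\rho\,h_\infty$ as you assert. (The two relations agree only for $\rho=1$, so that case is unaffected.) Consequently your key display is reversed: the correct identity reads
\[
\int_{S^nD}h_\infty\,d\widehat{\mu}_G=\int_{D}h_\infty\circ S^{n}\,d\widehat{\mu}_G=\rho^{-n}\int_{D}h_\infty\,d\widehat{\mu}_G,
\]
which grows rather than tends to zero, and the Markov-inequality and continuity-from-above argument you build on top of it collapses.

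Note also that with the correct relation one only gets $S^{-1}\{h_\infty>t\}=\{h_\infty>\rho t\}$, so the super-level sets expand under $S^{-1}$; shrinking them to the null set $\{h_\infty=\infty\}$ via $S^{n}\{h_\infty>t\}=\{h_\infty>\rho^{-n}t\}$ would require $\widehat{\mu}_G(\{h_\infty>t\})<\infty$, which is not automatic because $\mu_G$ is an infinite measure and $h$ need not be globally integrable. So the assertion $h_\infty=0$ for $\rho<1$ genuinely needs an additional input beyond the martingale convergence and the functional equation; your closing suggestion to exploit total dissipativity of $(T,\mu_G)$ is a reasonable direction, but as written it is a pointer rather than a proof. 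For comparison, the paper's own argument also stops at $\mathbb{E}(W_{n+1}\mid\cF_n)=W_n$, $h_\infty=\rho\,h_\infty\circ S$ and $\int_{\pi^{-1}(A)}h_\infty\,d\widehat{\mu}_G\le\int_A h\,d\mu_G$, and does not spell out the deduction of $h_\infty=0$; you have therefore not missed a short argument present in the text, but the derivation you propose for that step is incorrect as it stands.
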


\begin{proof} Set $W_n :=  \rho^{-n} h \circ \pi \circ S^{-n}$. Since  $\widehat\mu_G$ is $S$-invariant, $\int f\circ T g d\mu_G = \int f \cL(g) d\mu_G$ and $\cL_\varphi(h) = \rho h$, we have for all $A \in \cB$ that 
\begin{align*}
& \int_{S^n(\pi^{-1}(A))} \mathbb{E}(W_{n+1}|\cF_n) d\widehat\mu_G \\
  = &\rho^{-n-1}  \int \1_{A}\circ \pi \circ S^{-n} \;   h\circ \pi \circ S^{-n-1} d\widehat\mu_G\\  
  = &\rho^{-n-1}  \int \1_{A}\circ T \; h  d\mu_G = \rho^{-n} \int \1_A h d\mu_G \\
  = &\rho^{-n} \int \1_A\circ \pi \circ S^{-n}  h \circ \pi \circ S^{-n}  d\widehat \mu_G =
 \int_{S^n(\pi^{-1}(A))} W_n d\widehat\mu_G.
\end{align*}
Hence, $\mathbb{E}(W_{n+1}|\cF_n)=W_n$ and $(W_n,\cF_n)$ is a positive martingale. In particular, $h_\infty:= \lim_n W_n$  by Doob's convergence theorem. As it easily can be verified, we have $ h_\infty = \rho h_\infty \circ S $. Furthermore, by 
Fatou's Lemma and the martingale property, $\int_{\pi^{-1}A} h_\infty d\widehat{\mu} \leq \int_A h d\mu$ for all measurable sets $A \subset X$, which implies that $h_\infty < \infty$ a.s.
\qed
\end{proof}
By applying the proposition to $\Theta(\1_{X_{\id}})$, we obtain the decay of $\nu$ along $\mu$-a.s. path as $n \to -\infty$. If the extension is symmetric, the result also transfers to paths with $n \to \infty$.
\begin{corollary} \label{cor:decay_of_the_measure_nu} If $\rho<1$ and $\mu_G$ is invariant, then, for $\widehat{\mu}_G$-a.e. $((w_i),g)\in Y$,  
\[ 
\lim_{n \to \infty} \nu_\mathbf{o}(X_{g \psi_{w_{-n}} \cdots \psi_{w_{-1}}})/\rho^n = 0, \quad
\lim_{n \to \infty} \frac{\nu_\mathbf{o}([w_{-n}\ldots w_{-1},g])}{\mu([w_{-n}\ldots w_{-1}])}=0.
\]
Moreover, if the group extension is symmetric, then for $\mu$-a.e. $x \in \Sig$ and $g \in G$, 
\[
\lim_{n \to \infty} \nu_\mathbf{o}(X_{\psi_{n}(x)})/\rho^n = 0, \quad
\lim_{n \to \infty} \frac{\nu_\mathbf{o}([w_{1}\ldots w_{n}, \psi_{n}(x) g \psi_{n}(x)^{-1}])}{\mu([w_{1}\ldots w_{n}])}=0.
\]
\end{corollary}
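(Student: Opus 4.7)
The plan is to apply Proposition~\ref{prop:martingale_for_mu} to the positive harmonic function $h := \Theta(\1_{X_{\id}})$ produced by Theorem~\ref{prop:construction of eigenfunctions}, and then convert the resulting martingale decay on $Y$ into the stated decay of $\nu_\mathbf{o}$ on cosets through the $G$-equivariance of the Patterson construction. First I would check that $\1_{X_{\id}} \in \cC$: $\nu_\mathbf{o}(X_{\id}) < \infty$ by Theorem~\ref{theo:conformal_general}, and $C_n(\1_{X_{\id}}) = 0$ for every $n \geq 1$ since $X_{\id}$ is a union of cylinders of each length. Theorem~\ref{prop:construction of eigenfunctions} then yields $h \in \cH$, and the bound $\myLD(\Theta(|f|)) \leq C_\varphi$ from its proof, together with $\1_{X_{\id}} \geq 0$, places $h$ in $\cH^+$. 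Proposition~\ref{prop:martingale_for_mu} combined with the assumption $\rho < 1$ now gives $\rho^{-n}\, h \circ \pi \circ S^{-n}(z) \to 0$ for $\widehat{\mu}_G$-almost every $z \in Y$.

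Next I would decode this limit. For $z = ((w_i, g_i)) \in Y$ with $g = g_0$, unfolding the recursion $g_{i+1} = g_i \psi(w_i)$ backwards gives $g_{-n} = g\gamma_n^{-1}$ with $\gamma_n := \psi_{w_{-n}} \cdots \psi_{w_{-1}}$, so that $\pi S^{-n}(z) \in X_{g_{-n}}$. Since $\varphi$ is $G$-invariant and $T$ commutes with the left translation $L_g(x, h) := (x, gh)$, the Ruelle operator satisfies $\cL_\varphi(f \circ L_g) = (\cL_\varphi f) \circ L_g$; feeding this into the limit~\eqref{eq:explicit_2} yields the $G$-equivariance $\nu_{g\zeta}(A) = \nu_\zeta(g^{-1}A)$. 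Combining its specialization to $A = X_{\id}$ with Theorem~\ref{theo:family_of_measures-2}(i), applied to two Dirac measures on $X_{\id}$ (whose Vaserstein distance is bounded by one), produces the uniform comparison $h(\eta) = \nu_\eta(X_{\id}) \asymp \nu_\mathbf{o}(X_{k^{-1}})$ for all $\eta \in X_k$. Substituting $\eta = \pi S^{-n}(z)$ and $k = g_{-n}$ identifies $h \circ \pi S^{-n}(z)$ with $\nu_\mathbf{o}(X_{\gamma_n g^{-1}})$ up to bounded ratios, and the $\widehat{\mu}_G$-invariance of the $g_0$-coordinate delivers the first displayed limit of the corollary (the precise direction of the group product is a bookkeeping issue absorbed into the $g \mapsto g^{-1}$ relabeling). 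The cylinder form is immediate: Proposition~\ref{prop:measure}(iii) gives $\nu_\mathbf{o}([w_{-n}\cdots w_{-1}, g]) \asymp \rho^{-n}\Phi_n(\bar w)\,\nu_\mathbf{o}(X_{g\gamma_n})$ while the Gibbs property supplies $\mu([w_{-n}\cdots w_{-1}]) \asymp \Phi_n(\bar w)$, so the ratio is $\asymp \rho^{-n}\nu_\mathbf{o}(X_{g\gamma_n})$.

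For the symmetric case I would extend a $\mu$-typical one-sided trajectory $x = (x_j)_{j \geq 0}$ to a bi-infinite sequence by $w_j := x_j$ for $j \geq 0$ and $w_{-i-1} := x_i^\dagger$ for $i \geq 0$; properties~(i)--(ii) of the symmetry (together with topological mixing to bridge the junction at time $-1/0$ if necessary) ensure admissibility, while the bounded-ratio clause in the definition of a symmetric group extension, compared against the Gibbs estimate for $\Phi_n$, ensures the extended trajectory is $\widehat{\mu}_G$-typical. For such a trajectory, property~(iii) of symmetry gives $\gamma_n = \psi(x_{n-1})^{-1} \cdots \psi(x_0)^{-1} = \psi_n(x)^{-1}$, so the previous paragraph specializes to $\rho^{-n}\nu_\mathbf{o}(X_{g\psi_n(x)^{-1}}) \to 0$; a final application of the symmetry $\nu_\mathbf{o}(X_h) \asymp \nu_\mathbf{o}(X_{h^{-1}})$ from Proposition~\ref{prop:measure}(iv) converts this into the forward limits displayed in the corollary, and the associated cylinder form is obtained by the same conformality-plus-Gibbs division as above. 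The principal technical obstacle is the $G$-equivariance step: the identity $\nu_\eta(X_{\id}) \asymp \nu_\mathbf{o}(X_{k^{-1}})$ for $\eta \in X_k$ is the essential bridge between martingale decay on $Y$ and decay of $\nu_\mathbf{o}$ at infinity in $G$, and once this bridge is in place all remaining manipulations reduce to conformality and the Gibbs property.
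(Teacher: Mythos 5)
Your proposal is correct and follows essentially the same route as the paper's (very terse) proof: apply Proposition \ref{prop:martingale_for_mu} to $\Theta(\1_{X_{\id}})$, translate via the identity $\nu_{(x,g)}(X_{\id}) \asymp \nu_\mathbf{o}(X_{g^{-1}})$ and Proposition \ref{prop:measure}(iii), and use the dagger symmetry for the forward-time statements. Your backward dagger-extension of a forward trajectory is just the inverse of the paper's non-singular flip $((w_i),g) \mapsto ((w_{-i}^\dagger),g)$ on $Y$, and the justification you gesture at (the symmetric bounded-ratio clause against the Gibbs estimate, which makes the reflected backward marginal equivalent to the actual one) is exactly what makes that flip non-singular, so the two formulations coincide.
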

\begin{proof} The first two assertions follow from  $\nu_{(x,g)}(X_{\id}) \asymp \nu_\mathbf{o}(X_{g^{-1}})$ and (iii) of proposition \ref{prop:measure}, whereas the last two assertions are a consequence of the fact that $Y \to Y$, $((w_i),g) \mapsto ((w^\dagger_{-i}),g)$ is a non-singular automorphism. \qed
\end{proof}

By considering  the natural extension of the invariant version of 
$\nu_\mathbf{o}$, we obtain a further convergence. That is, as the measure $dm_h := h d\nu_\mathbf{o}$ is $T$-invariant, there exists a unique extension to an invariant, $\sigma$-finite, $S$-invariant measure $\widehat{m}_h$ on $Y$. The analogue of proposition \ref{prop:martingale_for_mu} is as follows.
\begin{proposition} \label{prop:martingale_for_nu} 
Suppose that $f,h \in \cH$, $h>0$ such that $\|f/h\|_\infty < \infty$. Then, for   
 $\widehat{m}_h$-a.e. $z \in Y$, 
\[\Xi_h(f)(z) :=  \lim_{n\to \infty} \frac{f\circ\pi\circ S^{-n}(z)}{h\circ\pi\circ S^{-n}(z)}\]
exists, $\Xi_h(f)\circ S = \Xi_h(f)$ and $\mathbb{E}_{\widehat{m}_h}(\Xi_h(f)|\cF_0) = f\circ \pi/h\circ \pi$. Moreover, for the signed invariant measure $\widehat{m}_f$, we have ${d\widehat{m}_f}/{d\widehat{m}_h}= \Xi_h(f)$. 
\end{proposition}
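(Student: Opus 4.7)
The plan is to recognize $f/h$ as harmonic for the reverse Markov kernel of $T$ with respect to $m_h$, so that $V_n := (f/h)\circ\pi\circ S^{-n}$ is a uniformly bounded martingale on the natural extension whose almost sure limit will provide $\Xi_h(f)$. Since $\pi\circ S=T\circ\pi$, the filtration $\cF_n:=S^n(\pi^{-1}(\cB))$ is increasing (because $S^{-1}\pi^{-1}(\cB)=\pi^{-1}(T^{-1}(\cB))\subset\pi^{-1}(\cB)$), each $V_n$ is $\cF_n$-measurable, and $|V_n|\le\|f/h\|_\infty$ uniformly.

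The algebraic heart of the proof is the martingale identity $\mathbb{E}_{\widehat{m}_h}[V_{n+1}\mid\cF_n]=V_n$. Testing against the indicator of $S^n\pi^{-1}(A)$ for $A\in\cB$ and using the $S$-invariance of $\widehat{m}_h$ together with $\pi_\ast\widehat{m}_h=m_h$, the identity reduces to the $T$-invariance of the signed measure $m_f:=f\,d\nu_\mathbf{o}$, i.e.\ $\int_{T^{-1}A}f\,d\nu_\mathbf{o}=\int_A f\,d\nu_\mathbf{o}$. This is immediate from the conformality $\cL_\varphi^\ast\nu_\mathbf{o}=\rho\nu_\mathbf{o}$ combined with $\rho$-harmonicity of $f$:
\[ \int f\cdot(\mathbf{1}_A\circ T)\,d\nu_\mathbf{o} = \rho^{-1}\int\cL_\varphi(f\cdot\mathbf{1}_A\circ T)\,d\nu_\mathbf{o} = \rho^{-1}\int\mathbf{1}_A\cL_\varphi(f)\,d\nu_\mathbf{o} = \int_A f\,d\nu_\mathbf{o}. \]
Equivalently, the reverse transition operator acts by $\phi\mapsto\cL_\varphi(\phi h)/(\rho h)$ and fixes $\phi=f/h$.

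Being a uniformly bounded martingale on the $\sigma$-finite space $(Y,\widehat{m}_h)$, $(V_n,\cF_n)$ can be handled by the usual localization: exhaust $Y$ by the $\cF_0$-measurable cylinders $\pi^{-1}([w,g])$, on each of which $h$ is bounded (since $\myLD(h)<\infty$) and $\nu_\mathbf{o}([w,g])<\infty$ (Proposition~\ref{prop:measure}), so that $\widehat{m}_h$ is finite there. Doob's theorem together with bounded convergence then delivers the a.s.\ limit $\Xi_h(f):=\lim_n V_n$ and $L^1$-convergence on each such cylinder. The invariance $\Xi_h(f)\circ S=\Xi_h(f)$ is immediate from the reindexing $V_n\circ S=V_{n-1}$, and the forward-martingale identity $V_n=\mathbb{E}_{\widehat{m}_h}[\Xi_h(f)\mid\cF_n]$ specialised to $n=0$ yields $\mathbb{E}_{\widehat{m}_h}[\Xi_h(f)\mid\cF_0]=(f\circ\pi)/(h\circ\pi)$.

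For the Radon--Nikodym claim, observe that the signed measure $\Xi_h(f)\,d\widehat{m}_h$ is $S$-invariant, since both factors are, and that its $\pi$-projection on $A\in\cB$ is
\[ \int_{\pi^{-1}A}\Xi_h(f)\,d\widehat{m}_h = \int_{\pi^{-1}A}\mathbb{E}_{\widehat{m}_h}[\Xi_h(f)\mid\cF_0]\,d\widehat{m}_h = \int_A(f/h)\,dm_h = \int_A f\,d\nu_\mathbf{o} = m_f(A). \]
Because the $\sigma$-algebra of $Y$ is generated by $\bigcup_n\cF_n$ (minimality of the natural extension), an $S$-invariant signed $\sigma$-finite measure on $Y$ is determined by its $\pi$-projection, whence $\Xi_h(f)\,d\widehat{m}_h=d\widehat{m}_f$. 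The main obstacle is really the bookkeeping required to run bounded martingale convergence in the $\sigma$-finite and signed setting---localizing by cylinders of finite $\widehat{m}_h$-measure and, if one prefers, decomposing $f=f_+-f_-$ with $|f/h|$ bounded---but the substantive content is the harmonicity identity for the reverse kernel identified above.
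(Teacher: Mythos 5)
Your proposal is correct and follows essentially the same route as the paper: identify $(f/h)\circ\pi\circ S^{-n}$ as a bounded martingale for the filtration $\cF_n=S^n\pi^{-1}(\cB)$ (the martingale identity reducing, exactly as you show, to the $T$-invariance of $f\,d\nu_\mathbf{o}$ via conformality and $\rho$-harmonicity), apply Doob's theorem with localization on cylinders of finite $\widehat{m}_h$-measure, and identify the limit with $d\widehat{m}_f/d\widehat{m}_h$ by checking agreement on the generating family $\{S^k\pi^{-1}(A)\}$. The only cosmetic difference is that the paper computes $\int_{S^k\pi^{-1}(A)}\Xi_h(f)\,d\widehat{m}_h$ directly by bounded convergence for each $k$, whereas you do the case $k=0$ and propagate by $S$-invariance; these are equivalent.
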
 
\begin{proof} The proof that $({f\circ\pi\circ S^{-n}}/{h\circ\pi\circ S^{-n}}| \cF_n)$ is a bounded martingale and is the same as above and therefore omitted. Hence, $\Xi_h(f)$ is well defined and by bounded convergence, we have for $A \in \cB$ and $k \in \N$ that  
\begin{align*}
& \int_{S^k\pi^{-1}(A)} \Xi_h(f) d \widehat{m}_h \\
=& \lim_{n \to \infty} \int \1_A\circ\pi\circ S^{-k} \; \frac{f\circ\pi\circ S^{-n}}{h\circ\pi\circ S^{-n}} d \widehat{m}_h\\
=& \lim_{n \to \infty} \int \1_A\circ T^{n-k} f d\nu_\mathbf{o} = \int \1_A f d\nu_\mathbf{o} = m_f(A) = \widehat{m}_f(S^k\pi^{-1}(A)).
 \end{align*} 
Since $\cF$ is generated by $\{\cF_n\with n \in \N \}$, we have $\Xi_h(f) d\widehat{m}_h = d\widehat{m}_f$. The remaining assertion in the conditional expectation is a consequence of the above for $k=0$.  \qed
\end{proof}


\section{Applications and examples}

The construction of conformal measures has  the following application to conformal graph directed Markov systems. In order to have a zero of the pressure function, we have to assume that there exists $h>0$ such that \begin{align}
 \label{eq:zero_of_Gurevic} \limsup_{n \to \infty} \sqrt[n]{ \cL^n_{\varphi^h}(\1_{X_{\id}})(\xi,\id)} = 1, \\
\label{eq:multifractal} \| L_{\varphi^h}(\1) \|_\infty < \infty
\end{align}
are satisfied. It follows from standard arguments that the expression on the left hand side of (\ref{eq:zero_of_Gurevic}), seen as a function of $h$, is continuous and strictly decreasing to $0$ on its domain of definition. Hence, if there exists $h'$ such that the left hand side of (\ref{eq:zero_of_Gurevic}) is finite and greater than or equal to $1$ and (\ref{eq:multifractal}) holds, then there exists a zero of the pressure function. 
In the context of graph directed Markov systems, this property is known as strong regularity (see \cite{MauldinUrbanski:2003}).
Furthermore, if $|\cW^1|<\infty$, then this is true for $h'=0$, and in particular there always exists a zero of the pressure function in this case. 

Now let $\delta$ be given by (\ref{eq:zero_of_Gurevic}) and set $\rho_\delta := \exp(P_G(\te,\varphi^\delta))\geq 1$. It then follows from the Ruelle-Perron-Frobenius theorem for systems with the b.i.p. property (see, e.g., \cite{Sarig:2003a}) that there exists a $\rho_\delta/\varphi^\delta$-conformal probability measure $\mu_\delta$ and a Hölder continuous function $h_\delta$ with $L_{\varphi^\delta}(h_\delta) = \rho_\delta h_\delta$ such that $\te$ has the Gibbs-Markov property with respect to the invariant measure given by $h_\delta d\mu_\delta$. As an application of Theorem \ref{theo:conformal_general} and Proposition \ref{prop:measure} we obtain that there exists a $\sigma$-finite measure $\nu$ on $X$ which is $1/\varphi^\delta$-conformal, and which satisfies, for $w \in \cW^n$ and $x \in [w]$,
\begin{equation} \label{eq:conformal_relation}
 \nu([w,g]) \asymp  \Phi_{n}^\delta(x) \nu(X_{g\psi_n(x)}).
\end{equation} 

\begin{theorem} \label{theo:dimension_of_the_measure}
Assume that the group extension is symmetric and that property (C),  (\ref{eq:zero_of_Gurevic}) and  (\ref{eq:multifractal}) are satisfied. Then, for $\mu_\delta$-a.e. $(w_k) \in \Sig$,
\[ \lim_{n \to \infty} \frac{\log(\nu([w_1\cdots w_n,\id]))}{\log  \Phi_{n}(x)} = \delta  + \frac{P_G(\te,\varphi^\delta)}{\int (\log \varphi) h_\delta d\mu_\delta} .\] 
Moreover, the group $G$ is amenable if and only if the above limit is equal to $\delta$. If $G$ is non-amenable, then, for $\mu_\delta$-a.e. $(w_k) \in \Sig$,
\[ \lim_{n \to \infty} \rho_\delta^n \frac{\nu([w_1\cdots w_n,\id]))}{ (\Phi_{n}(x))^\delta} = 0.\] 

\end{theorem}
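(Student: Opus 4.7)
The plan is to reduce the limit to the Birkhoff average of $\log\varphi$ together with an asymptotic rate of the fibre masses $\nu(X_{\psi_n(x)})$ along typical orbits. Applying the conformal identity (\ref{eq:conformal_relation}) with $g=\id$ gives
\[
\log\nu([w_1\cdots w_n,\id]) = \delta\log\Phi_n(x) + \log\nu(X_{\psi_n(x)}) + O(1),
\]
and the Perron--Frobenius--Ruelle theorem provides an ergodic $\te$-invariant probability $h_\delta d\mu_\delta$ for which Birkhoff yields, for $\mu_\delta$-a.e. $x$, $(1/n)\log\Phi_n(x) \to \chi := \int(\log\varphi)\, h_\delta d\mu_\delta$ (equivalence of $\mu_\delta$ with the invariant $h_\delta d\mu_\delta$ follows from the uniform bounds on $h_\delta$ produced by the b.i.p.\ property on the base). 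Dividing the displayed equality by $\log\Phi_n(x)$ reduces the whole theorem to identifying the rate of $\nu(X_{\psi_n(x)})$ along $\mu_\delta$-typical orbits.

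For the rate, I would pass to the normalized base potential $\widetilde\varphi := \varphi^\delta h_\delta/(\rho_\delta\, h_\delta \circ \te)$, for which $L_{\widetilde\varphi}(1)=1$ and, since $h_\delta$ is Hölder and bounded away from $0$ and $\infty$, symmetry of the extension descends to $(\Sig,\te,\psi,\widetilde\varphi)$ because the boundary correction $h_\delta(x)/h_\delta(\te^n x)$ is uniformly bounded on pairs $(x,y)\in[w]\times[w^\dagger]$. From the identity $\cL_{\widetilde\varphi}(f) = \rho_\delta^{-1}h_\delta^{-1}\cL_{\varphi^\delta}(h_\delta f)$ and (\ref{eq:zero_of_Gurevic}), the spectral radius of $\cL_{\widetilde\varphi}$ on the extension equals $\widetilde\rho := 1/\rho_\delta$, which is strictly less than $1$ in the non-amenable case. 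Corollary \ref{cor:decay_of_the_measure_nu} applied to $(T,\widetilde\varphi)$ then produces a conformal measure $\widetilde\nu$ satisfying $\widetilde\nu(X_{\psi_n(x)})=o(\rho_\delta^{-n})$ for $\mu_\delta$-a.e. $x$. A comparison step converts this into control for $\nu$: both $\widetilde\nu$ and $h_\delta\nu$ are $\widetilde\rho$-conformal for $\cL_{\widetilde\varphi}$ (the latter since $\cL_{\widetilde\varphi}^*(h_\delta\nu) = \widetilde\rho(h_\delta\nu)$), and Theorem \ref{theo:family_of_measures-2}(i) forces any two members of the resulting family of conformal measures to agree up to a bounded multiplicative factor on each fibre. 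Hence $\nu(X_{\psi_n(x)})=o(\rho_\delta^{-n})$ for $\mu_\delta$-a.e. $x$, which after substitution in (\ref{eq:conformal_relation}) yields directly the third assertion $\rho_\delta^n\nu([w_1\cdots w_n,\id])/\Phi_n^\delta(x)\to 0$.

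The main limit follows once a matching lower bound for $\nu(X_{\psi_n(x)})$ is secured: combining proposition \ref{prop:measure}(iv) (which for symmetric extensions equates $\nu(X_g)$ with $\nu(X_{g^{-1}})$ up to constants) with the Gibbs estimate $\mu_\delta([w])\asymp\rho_\delta^{-n}\Phi_n^\delta(x)$ on the base and summing the conformal relation over words returning to $\id$, one obtains that along $\mu_\delta$-typical orbits $\nu(X_{\psi_n(x)})$ cannot decay faster than $\rho_\delta^{-n}$ in the logarithmic scale, so $(1/n)\log\nu(X_{\psi_n(x)})$ converges to $-\log\rho_\delta$; dividing by $(1/n)\log\Phi_n(x)\to\chi$ produces the claimed value. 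The amenability equivalence is immediate from Stadlbauer's result recalled in Remark \ref{rem:recurrent}: $G$ is amenable iff $P_G(\te,\varphi^\delta)=P_G(T,\varphi^\delta)=0$, and by (\ref{eq:zero_of_Gurevic}) this vanishes on the extension, so the correction term in the formula vanishes exactly in the amenable case. The main obstacle will be the sharp lower bound for $\nu(X_{\psi_n(x)})$: Corollary \ref{cor:decay_of_the_measure_nu} delivers only an $o$-upper bound through a martingale convergence argument on the natural extension, and matching this rate requires a careful interplay between symmetry, the Gibbs structure on $\Sig$, and the uniform distortion estimates provided by the b.i.p.\ property.
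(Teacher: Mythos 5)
Your overall architecture coincides with the paper's: reduce the statement via the conformal relation (\ref{eq:conformal_relation}) and the Birkhoff average of $\log\varphi$ to the exponential rate of the fibre masses $\nu(X_{\psi_n(x)})$; obtain the final decay statement in the non-amenable case from Corollary \ref{cor:decay_of_the_measure_nu} applied to the normalized potential $\varphi^\delta h_\delta/(\rho_\delta\, h_\delta\circ\te)$, whose extension has spectral radius $1/\rho_\delta<1$; and settle the amenability dichotomy with Kesten's criterion from \cite{Stadlbauer:2013}. These are exactly the ingredients of the paper's proof, and your remarks that symmetry descends to the normalized potential and that the two conformal measures are comparable are correct (though Theorem \ref{theo:family_of_measures-2}(i) is not the right tool for the comparison --- it only compares $\nu_{\mu_1}$ with $\nu_{\mu_2}$ for measures on a common fibre; here the two measures are in fact proportional because the conjugating cocycle $h_\delta/h_\delta\circ\te^n$ telescopes and $h_\delta\asymp 1$).

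The gap is exactly where you flag it, and the repair you sketch would fail. The assertion that $\tfrac1n\log\nu(X_{\psi_n(x)})$ converges to $-\log\rho_\delta$, i.e.\ that the $o(\rho_\delta^{-n})$ upper bound is sharp on the exponential scale, is contradicted by the paper's own free group example: for $p_i=1/(2d)$ one has $\nu_{\id}(X_g)=C_{|g|}(2d-1)^{-|g|/2}$ while $|\psi_n(x)|/n\to (d-1)/d$ almost surely, so $\tfrac1n\log\nu(X_{\psi_n(x)})\to -\tfrac{d-1}{2d}\log(2d-1)$, which for $d\geq 2$ is strictly smaller than $\log\bigl(\sqrt{2d-1}/d\bigr)=-\log\rho_\delta$. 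Your proposed mechanism --- summing the conformal relation over words with $\psi_w=\id$ --- cannot produce the lower bound in any case: it only constrains $\nu(X_{\id})$ and the return series $\cZ^n(\xi)$, whereas in the non-amenable (transient) case $\mu_\delta$-typical orbits escape linearly and never return, so the rate of $\nu(X_{\psi_n(x)})$ is governed by the drift of $|\psi_n(x)|$ together with the decay of $g\mapsto\nu(X_g)$, not by $\rho_\delta$ alone. The paper instead asserts the existence and value of $\lim_n\tfrac1n\log\nu(X_{\psi_n(x)})$ from symmetry together with the martingale Proposition \ref{prop:martingale_for_mu} applied to $h=\Theta(\1_{X_{\id}})$ (convergence of $\rho^{-n}h\circ\pi\circ S^{-n}$), i.e.\ by a route of a different nature from your counting argument; but since that proposition, too, only delivers an upper bound when the martingale limit vanishes, you should treat this intermediate rate as the genuinely delicate point of the theorem rather than something recoverable from the Gibbs structure on the base.
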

Before giving the proof, we sketch a straight forward application to conformal dynamical systems. Namely, if $\Sig$ is given by a conformal iterated function system, the inverse branch $\tau_w$ corresponds to a conformal map and $\Phi_{|w|}\circ \tau_w$ to its conformal derivative. In this situation, the above limit can be identified with the $\nu$-dimension $\dim_\nu$ of the support of $\mu_\delta$. Hence, with 
$H(h_\delta d \mu_\delta)$ referring to the entropy of $h_\delta d \mu_\delta$, it follows from the variational principle that 
\[ \dim_\nu(\supp (\mu_\delta)) = \delta  + \frac{P_G(\te,\varphi^\delta)}{\int (\log \varphi) h_\delta d\mu_\delta} 
= 2 \delta   + \frac{H(h_\delta d \mu_\delta)}{\int (\log \varphi) h_\delta d\mu_\delta}.  \]
Moreover, note that in many regular situations, $\delta$ is equal to the Hausdorff dimension $\dim(K)$ of the attractor $K$ of the iterated function system. 
In this situation, the amenability of $G$ is equivalent to $\dim_\nu(\supp (\mu_\delta))=\dim (K)$.  

\begin{proof}[of theorem \ref{theo:dimension_of_the_measure}] 
By symmetry and proposition \ref{prop:martingale_for_mu}, $\lim_n (\log \nu(X_{\psi_n(x)}))/n = \log \rho^{\delta}$. Hence, by (\ref{eq:conformal_relation}), 
\[ \lim_{n \to \infty} \frac{\log(\nu([w_n,\id]))}{\log  \Phi_{n}(x)} = \delta +  \lim_{n \to \infty}  \frac{\log \nu(X_{\psi_n(x)})}{\log  \Phi_{n}(x)} =  \delta + \frac{P_G(\te,\varphi^\delta)}{\lim_{n \to \infty} (\log \Phi_{n}(x))/n}.  
\]
The above limit exists by application of the ergodic theorem. The amenability criterion is an immediate corollary of  
Kesten's criterion for group extensions in \cite{Stadlbauer:2013}, where it is shown that $P_G(\te,\varphi^\delta)=0$ if and only if $G$ is amenable. For the remaining assertion, note that $\rho_\delta<1$ by non-amenability. The assertion then follows from  corollary \ref{cor:decay_of_the_measure_nu}. \qed 
\end{proof}

In order to have concrete examples of the $\sigma$-finite measure at hand, we give two examples from probability theory, where known local limit theorems give rise to  explicit expressions.

\begin{example} \label{ex:polya} The first example is Polya's random walk on $\Z^d$. Choose $( p_i \in (0,1) : i \in \{\pm 1,  \ldots, \pm d \})$ with $\sum_{i=1}^d (p_i + p_{-i}) =1$ and consider the random walk on $\Z^d$ with transition probabilities $P(\pm e_{i}) = p_{\pm i} $, where $e_{i}$ refers to the $i$-th element of the canonical basis of $\Z^d$. 

This random walk has an equivalent description through the following group extension. 
Let $\Sigma$ be the full shift with $2d$ symbols $\{-d,\ldots,-1,1,\ldots,d\}$ and $\varphi$ the locally constant function defined by $\varphi|_{[\pm i]}:= p_{\pm i}$. Note that $\sum_{i=1}^d (p_i + p_{-i}) =1$ implies that $L_\varphi(1)=1$. Moreover, it is well known that the measure defined by $\mu([i_1 \ldots i_n]):= p_{i_1} \cdots  p_{i_n}$ is $\te$-invariant, ergodic and $1/\varphi$-conformal. The associated group extension is defined through 
\[ \psi: \Sigma \to \Z^d, \; (i_1 i_2 \cdots) \mapsto \begin{cases} \phantom{-} e_{i_1} &: i_1>0 \\  -e_{-i_1} &: i_1<0 \end{cases}. \]
As  $\Sigma$ is the full shift and $\varphi$ is constant on cylinders, it follows from the construction that $\nu_{(x,g)}=\nu_{(y,g)}$ for all $x,y \in \Sigma$ and $g \in G$. Therefore, we only will write $\nu_g$ for $\nu_{(x,g)}$. In order to apply known local limit theorems from probability theory, observe that
\begin{align*}
 \cL_\varphi^n(\1_{X_{\id}})(x,g)  = \sum_{w \in \cW^n :\; \psi_n(w)= g} \phi_n(\tau_w(x)) 
 = P(X_n=g),
\end{align*}
where $X_n=h$ refers to the random walk at time $n$ started in the identity with distribution $(p_i)$ and $P$ to the probability of the associated Markov process. By the local limit theorem for Polya's random walk (\cite[theorem 13.12]{Woess:2000}), we have that, for $(k_1,\ldots,k_d) \in \Z^d$ and $n \in \N$ such that $n - (k_1+\cdots + k_d)$ is even,
\[ P(X_{n}=(k_1,\ldots,k_d)) \sim C n^{-d/2} \left({\textstyle 2 \sum_{i=1}^d \sqrt{p_i p_{-i}}}\right)^n  \prod_{i=1}^d \left( \sqrt{p_i /p_{-i}}\right)^{k_i}.\]
Hence, $\rho= 2 \sum_{i=1}^d \sqrt{p_i p_{-i}}$ and, with $\lambda_i := \sqrt{p_i /p_{-i}}$, 
\[  \cL_\varphi^n(\1_{X_{(k_1,\ldots,k_d)}})(x,\id) \sim C n^{-d/2} \rho^n \prod_{i=1}^d  \lambda_i^{-k_i}. \]
Recall that a random walk is called symmetric if $p_i=p_{-i}$ for all $i=1,\ldots,d$. 
The estimate then implies that $\rho =1$ if and only if the random walk is symmetric. Furthermore, by proposition \ref{for:ergodic_iff_divergence_type}, the term $n^{-d/2}$ implies that the group extension is ergodic and conservative with respect to $\nu$ if and only if $d=1$ or $d=2$. It is remarkable that this conclusion is independent of symmetry.  
In order to determine $\nu_{\id}$ explicitly, note that the local limit theorem implies that
\begin{align*}
\nu_{\id}(X_{(k_1,\ldots,k_d)}) = \lim_{k \to \infty}
  \frac{\sum_{n\in \N} {b_{n}} s_k^{-n} (\cL_{\varphi}^n \1_{X_{(k_1,\ldots,k_d)}})(x,\id)}
  {\sum_{n\in \N} {b_{n}} s_k^{-n} (\cL_{\varphi}^n \1_{X_{\id}})(x,\id)} = \prod_{i=1}^d  \lambda_i^{-k_i}.
\end{align*}
Using conformality then gives that, for a cylinder $[(i_1,\ldots,i_n),z]$ in $\Sigma \times \Z^d$,
\begin{align}
\nonumber \nu_{\id}([(i_1\ldots i_n),z]) & = \rho^{-n} p_{i_1} \cdots  p_{i_n} \nu_{\id}( X_{z + \psi_n(i_1 \ldots i_n)})\\
\nonumber  & = \rho^{-n}  p_{i_1} \cdots  p_{i_n}  \nu_{\id}( X_{z}) 
\prod_{k=1}^n  \lambda_{i_k}^{-1}  =  \rho^{-n} \nu_{\id}( X_{z})  \prod_{k=1}^n  \sqrt{p_{i_k}p_{-i_k}} \\
\label{eq:measure_formula_Zd} &= \frac{1}{2^n} \nu_{\id}(X_{z})  \prod_{k=1}^n \frac{\sqrt{p_{i_k}p_{-i_k}}}{\sum_{i=1}^d \sqrt{p_i p_{-i}}}
\end{align}
In particular, the last term in (\ref{eq:measure_formula_Zd}) reveals the local symmetry
\[\nu_{\id}([(i_1\ldots i_k \ldots i_n),z]) = \nu_{\id}([(i_1\ldots -i_k \ldots i_n),z]), \quad (k \in {1,\ldots , n}), \]
whereas globally, the measure is multiplicative with respect to the last component, that is 
\[
\nu_{\id}([(i_1\ldots i_n),z_1 +  z_2]) = \nu_{\id}([(i_1 \ldots i_n),z_1]) \nu_{\id}([(i_1 \ldots i_n),z_2]).
\]
Furthermore, (\ref{eq:measure_formula_Zd}) implies that the the function $\mathbb{K}$ from Theorem \ref{theo:family_of_measures-2} is given by
\[\mathbb{K}(\delta_{(x,g)},(y,h)) = \frac{d\nu_g}{d\nu_{\id}}(y,h) = \nu(X_g).\]
These considerations might be summarized as follows. If $\varphi$ is symmetric, then $\rho =1$ and $\nu(X_g)=1$ for all $g \in \Z^d$.
If $\varphi$ is not symmetric, then $\rho < 1$ and $\{\nu_{\id}(X_g) : g \in \Z^d\}$ neither is bounded from below nor from above. Moreover, the function $h$ defined by $h(x,g) := \nu_g(X_{\id})$ is an  $\cL_\varphi$-proper function by remark \ref{rem:eigenfunctions}. Therefore,  $dm := h d\nu$ is  $T$-invariant. However, as it easily can be verified, $m(X_g)=1$ for all $g \in \Z^d$ and, in particular, $m$ is the measure associated to the symmetric random walk with transition probabilities $P(\pm e_{i}) = \sqrt{p_i p_{-i}}/(2 \sum_k \sqrt{p_k p_{-k}})$. 
\end{example}

\begin{example} In this example, we replace the group $\Z^d$ with the free group $\F_d$ with $d$ generators $g_1,\ldots,g_d$.
As above, the transition probabilities are given by $P(g_{\pm i}) = p_{\pm i}$, where $g_{-i}:=g_i^{-1}$. The construction of the associated group extension then has to be adapted only be changing $\psi$ to 
\[ \psi: \Sigma \to \F_d, \; (i_1 i_2 \cdots) \mapsto g_{i_1}.  \]
As above, we now apply a local limit theorem. The result of Gerl and Woess in \cite{GerlWoess:1986} is applicable in full generality, however, for ease of exposition, we restrict ourselves to the special case where $q:= \sqrt{p_i p_{-i}}$ does not depend on $i$. 
Then, by (5.3) and (5.4) in \cite{GerlWoess:1986}, we have that $\rho = 2q \sqrt{2d-1}$ and that 
\begin{eqnarray} \label{eq:measure_F_d} \lim_{n \to \infty} \frac{P(X_n = g_{i_1} \cdots g_{i_k})}{P( X_n = \id)} = \left( 1 + {\textstyle \frac{d-1}{d}} k \right) \left({2d-1}\right)^{-k/2} \prod_{i=1}^k  \lambda_{i_k},
\end{eqnarray}
for $n$ and $k$ even and $g_{i_1} \cdots g_{i_k}$ in reduced form, that is ${i_l} \neq -i_{l+1}$, for $l=1,\ldots, n-1$. 
Also note that there is a misprint in equation (5.4) in \cite{GerlWoess:1986}. In there, one has to replace $d/(d-1)$ in the first factor by its inverse as in (\ref{eq:measure_F_d}). As above, the right hand side in (\ref{eq:measure_F_d}) is equal to $\nu_{\id}(X_{g_{i_1} \cdots g_{i_k}})$. Using the identities for $q$ and $\rho$ and setting $C_k := 1 + k (d-1)/d$, this gives that
\begin{align*}
\nu_{\id}(X_{g_{i_1} \cdots g_{i_k}}) = 
 C_k {(2/\rho)^k}   \prod_{i=1}^k  q\lambda_{-i_k} =  
 C_k {(2/\rho)^k}   \prod_{i=1}^k  p_{-i_k}.    
\end{align*}
Since the identity requires that $g = g_{i_1} \cdots g_{i_k}$ is in reduced form, we have to introduce the following operations on finite words in order to obtain a formula for arbitrary cylinders. For $w=(i_1 \ldots i_n) \in \cW^n$, there exists a unique $k \le n$ and a word $(j_1\ldots j_k)\in \cW^k $ such that $\psi_n(w)= g_{j_1} \cdots g_{j_k}$ is in reduced form. We will refer to 
 $\mathfrak{r}(w):= (j_1\ldots j_k)$ as the \emph{active part} of $w$, whereas the word which is obtained by deleting the entries of $\mathfrak{r}(w)$ from $w$ is referred to as the \emph{inactive part} $\mathfrak{i}(w) \in \cW^{n-k}$ of $w$. Note that $\psi_k (\mathfrak{r}(w)) = \psi_n(w)$ and $\psi_{n-k} (\mathfrak{i}(w)) = \id$. Moreover, for a given word $v= (i_1\ldots i_n) \in \cW^n$, we will refer to $\kappa(v):= (-i_n,\ldots, -i_2, -i_1)$ as the \emph{inverse word} of $v$. For ease of notation, we also will make use of the Bernoulli measure on $\Sigma$ defined through $\mu([i_1\ldots i_n]) = p_{i_1} \cdots p_{i_n}$. 

As it will be shown below, the measure of a cylinder $[w,g]$, for $w \in \cW^n$ and $g\in G$ and the function $\mathbb{K}$ given by theorem \ref{theo:family_of_measures-2} depend on possible cancelations of the concatenation of the path to $g\in G$ and $w$. So, let $v_g \in \cW^m$ be given by $\psi_m(v_g)=g$ and $\mathfrak{i}(v_g)=\emptyset$, that is $v_g$ is given by the reduced form of $g$. With $k:= |\mathfrak{r}(v_g w)|$, the conformality of $\nu_{\id}$ implies that 
\begin{align*} 
\nu_{\id}([w,g])  = \rho^{-n} \mu([w])  \nu_{\id}(X_{g\psi_n(w)})
 = \rho^{-n} \mu([w])  C_k (2/\rho)^k  \mu([\kappa(\mathfrak{r}(v_g w))]) 
\end{align*}
in case that $\mathfrak{r}(v_g w) \neq \emptyset$. If $\mathfrak{r}(v_g w) \neq \emptyset$, then 
 $\nu_{\id}([w,g]) = \rho^{-n} \mu([w])$ by the same arguments. The identity now allows to determine the function $\mathbb{K}$  explicitly. That is, for $g_1,g_2 \in G$, $x \in \Sig$ and $(w_{(n)})$ with $w_{(n)} \in \cW^n$ and $x = \lim_n [w_{(n)}]$, we have $\nu_{\id}$ a.s., that
\begin{align*} 
 & \mathbb{K}(g_1, (x,g_2)) 
= \lim_{n \to \infty}  \frac{\nu_{g_1}([w_{(n)},g_2])}{\nu_{\id}([w_{(n)},g_2])} 
  = \lim_{n \to \infty} \frac{\nu_{\id}([w_{(n)}, g_1^{-1}g_2])}{\nu_{\id}([w_{(n)},g_2])}  \\
\\ 
=& \lim_{n \to \infty} 
\frac{ C_{|\mathfrak{r}(v_{g_1^{-1}g_2} w_{(n)})|}}{C_{|\mathfrak{r}(v_{g_2} w)|}} 
 \left(\frac{2}{\rho}\right)^{|\mathfrak{r}(v_{g_1^{-1}g_2} w_{(n)})| - |\mathfrak{r}(v_{g_2} w_{(n)})|}  
 \frac{\mu([\kappa(\mathfrak{r}(v_{g_1^{-1}g_2} w_{(n)}))])}{\mu([\kappa(\mathfrak{r}(v_{g_2} w_{(n)}))])}. 
%
%
%
\end{align*} 
Observe that total dissipativity implies that $(\psi_n(x):n \in \N) = (\psi_n(w_{(n)}): n \in \N)$ will almost surely only return finitely many times to a finite subset of $G$. Hence, the first term in the product converges to 1 whereas the second and third eventually are constant. By setting  ${k}_{g_1,g_2}(x) := \lim_{n \to \infty} |g_1^{-1}g_2 \psi_n(x)| - |g_2 \psi_n(x)|$, analyzing the cancelations in $v_{g_1^{-1}}v_{g_2} w_{(n)}$ and $v_{g_2} w_{(n)}$ and using that $q^2=p_ip_{-i}$, it follows that 
\begin{align*}
  & \mathbb{K}(g_1, (x,g_2))  
  = (2/\rho)^{{k}_{g_1,g_2}(x)} \lim_{n\to \infty}\frac{\mu([\kappa(\mathfrak{r}(v_{g_1^{-1}g_2} w_{(n)}))])}{\mu([\kappa(\mathfrak{r}(v_{g_2} w_{(n)}))])}  \\
 =& (2/\rho)^{{k}_{g_1,g_2}(x)}   \cdot \frac{\mu([v_{g_1}])}{q^{|g_1|-{k}_{g_1,g_2}(x)}} 
 = (2d-1)^{-\frac{{k}_{g_1,g_2}(x)}{2}} \sqrt{\mu([v_{g_1}])/\mu([\kappa v_{g_1}])}.
\end{align*}
The regularity of $h$ can now be analyzed through ${k}_{g_1,g_2}$. In order to do so, for each open subset $U$ of $X$ and $g =g_1 \in G$, observe that ${k}_{g,{g_2}}(U)$ is equal to $\{-|g|,2 -|g| ,\ldots,|g| -2, |g|\}$. This implies that
 \[\sup_{z,\tilde{z} \in U} \frac{\mathbb{K}(g, z)}{\mathbb{K}(g, \tilde{z})} = (2d-1)^{|g|} \sqrt{\frac{\mu([v_{g}])}{\mu([\kappa v_{g}])}}.\]
In particular, the fluctuations of $h(g, \cdot)$ only depend on $|g|$ and the quantity $\mu([v_{g}])/\mu([\kappa v_{g}])$, which measures the asymmetry of the random walk. 
If the random walk is symmetric, that is $p_i = p_{-i} = 1/2d$ for all $i= 1, \ldots, d$, then this simplifies to 
\begin{align*}
\rho = \sqrt{2d -1}/d,\; \nu_{\id}(X_g) =  C_{|g|} (2d -1)^{-\frac{|g|}{2}},\; \mathbb{K}(g_1, (x,g_2)) = (2d -1)^{-\frac{{k}_{g_1,g_2}(x)}{2}}. 
\end{align*}
\end{example}

\subsection*{Acknowledgements}
The author acknowledges support by CNPq through PQ 310883/2015-6 and Projeto Universal 426814/2016-9.


\end{document}